\newtheorem{theorem}{Theorem}[section]
\newtheorem{lemma}[theorem]{Lemma}
\theoremstyle{definition}
\theoremstyle{remark}
\newtheorem{remark}[theorem]{Remark}
\def\R{\mathbb{R}}
\def\Z{\mathbb{Z}}
\def\T{\mathbb{T}}
\newcommand{\fe}{\mathrm{e}}
\newcommand{\bR}{{\mathbb R}}
\newcommand{\bT}{{\mathbb T}}
\newcommand{\bN}{{\mathbb N}}
\newcommand{\bZ}{{\mathbb Z}}
\numberwithin{equation}{section}
\begin{document}

\title[Low-regularity integrator for KdV equation]{Optimal convergence of a second order low-regularity integrator for the KdV equation}

\author[Y. Wu]{Yifei Wu}
\address{\hspace*{-12pt}Y.~Wu: Center for Applied Mathematics, Tianjin University, 300072, Tianjin, China}
\email{yerfmath@gmail.com}

\author[X. Zhao]{Xiaofei Zhao}
\address{\hspace*{-12pt}X.~Zhao: School of Mathematics and Statistics \& Computational Sciences Hubei Key Laboratory, Wuhan University, Wuhan, 430072, China}
\email{matzhxf@whu.edu.cn}

%

\date{}

\dedicatory{}

\begin{abstract}
In this paper, we establish the optimal convergence result of a second order exponential-type integrator from (136, Numer. Math., 2017) for solving the KdV equation under rough initial data. The scheme is explicit and efficient to implement. By rigorous error analysis, we show that the scheme provides the second order accuracy in $H^\gamma$ for initial data in $H^{\gamma+4}$ for any $\gamma\geq0$, where the regularity requirement is lower than the classical methods. The result is confirmed by numerical experiments and comparisons are made with the Strang splitting scheme.
 \\ \\
{\bf Keywords:} KdV equation, rough data, low-regularity method, second order accuracy,  error estimates, exponential-type integrator \\ \\
{\bf AMS Subject Classification:} 65L05, 65L20, 65L70, 65M12, 65M15.
\end{abstract}

\maketitle

\section{Introduction}
During the past few years, to solve efficiently problems with rough initial data, the so-called low-regularity numerical integrators have been proposed for some dispersive models. Compared to the classical numerical discretizations such as the finite difference methods or standard exponential integrators \cite{Hochbruck} or splitting schemes \cite{Splitting}, the low-regularity methods require less regularity of the solution to reach their optimal convergence rates. For example, for the cubic nonlinear Schr\"{o}dinger equation, the first order convergence in $H^r$ has been achieved under only $H^{r+1}$-data \cite{lownls2,lownls3}, and for the  one-dimensional quadratic nonlinear Schr\"{o}dinger equation \cite{lownls2,lownls3} or the nonlinear Dirac equations \cite{diraclow}, only $H^{r}$-data is needed. In this work, we are concerned with the Korteweg-de Vries (KdV)  equation, which is a classical mathematical model for the waves on shallow water surfaces, under the rough initial data on a torus:
\begin{equation}\label{model}
 \left\{\begin{split}
& \partial_tu(t,x)+\partial_x^3u(t,x)
 =\frac{1}{2}\partial_x(u(t,x))^2,
 \quad t>0,\ x\in\bT,\\
 &u(0,x)=u_0(x),\quad x\in\bT,
 \end{split}\right.
\end{equation}
where $\bT=(0,2\pi)$, $u=u(t,x):\bR^{+}\times\bT\to\bR$ is the unknown and $u_0\in H^{s_0}(\bT)$ with some $0\leq s_0<\infty$ is a given initial data. For the numerical studies of the KdV equation (\ref{model}) under smooth enough initial data cases, we refer to \cite{Spectralkdv0,splittingJCP,FD1,DG1,Spectralkdv1,Fourierkdv,Spectralkdv2,splitting0,DG2}.
However, in practice the initial data may not be ideally smooth due to multiply reasons such as  measurements or noise \cite{kdv-wellposed}. Analytically, the global well-posedness of (\ref{model}) on the torus under rough data has already been established in \cite{CKSTT-03-KDV, KaTo}. That is, for any $u_0\in H^{s_0}(\T), s_0\ge -\frac{1}{2}$ and any positive time $T >0$, there exists a unique solution of \eqref{model} in a certain Banach space of functions
$X \subset C([0, T ];H^{s_0}(\T))$ as established in \cite{CKSTT-03-KDV} by PDE methods, and \cite{KaTo} extended the result to $s_0\ge -1$ by the inverse scattering method. 
See also \cite{BaoWu,CKSTT-04-KDV} for the theoretical studies of the generalized KdV equations.

The numerical aspects of the KdV equation (\ref{model}) under rough data have been addressed in \cite{FD-kdv,kdv-wellposed,kdv-kath,splitting2}.
By introducing the twisted variable $v:=\fe^{\partial_x^3t}u$ and the Duhamel's formula at $t_n=n\tau$ with $\tau>0$ the time step:
\begin{equation}\label{v eq}
  v(t_n+\tau,x)=v(t_n,x)+\frac{1}{2}\int_0^\tau\fe^{(t_n+s)\partial_x^3}
  \partial_x\left(\fe^{-(t_n+s)\partial_x^3}v(t_n+s,x)\right)^2ds,
\end{equation}
 \cite{kdv-kath} proposed an exponential-type numerical scheme by letting $v(t_n+s,x)\approx v(t_n,x)$, and then the integration for $s$ was found exactly and explicitly in the physical space. Comparing to classical first order methods, this strategy gives rise to a first order numerical scheme that allows the low regularity requirement for solving the KdV equation (\ref{model}):
$$\|u(t_n,\cdot)-u^n\|_{H^1}\lesssim \tau,$$
up to some finite time for $u\in H^{3}$ as was proved in \cite{kdv-kath}, where $u^n$ denotes the numerical solution at $t_n$. That is to say for solving the KdV, in order to reach the first order accuracy, the regularity requirement of such exponential-type low-regularity integrator is the boundedness of two additional spatial derivatives of the solution. This requirement is to some extend essential for the direct integration methods due to the Burgers-type nonlinearity in the KdV equation \cite{Schratznew}.

To push the convergence rate of such low-regularity integrators to the second order, one natural way is to use the idea of Picard iteration, i.e. using the first order scheme to approximate the solution in the nonlinearity in the Duhamel's formula (\ref{v eq}) and then integrate exactly for $s$ in the Fourier space. Obviously by doing this, the Burgers-nonlinearity will add one more loss of derivative. What makes it worse after the iteration is that, the quadratic nonlinearity in the KdV equation will generate four different modes in the Fourier space, which after the time integration can not be translated back to an explicit function in the physical space, and the scheme will then have to deal with a heavy convolution in the Fourier space. A practical strategy as a compensation for computational efficiency is to drop some Fourier modes before the time integration, in order to get back to the physical space, which consequently further asks for more regularity of the solution for obtaining the second order convergence rate. See the very recent effort on the nonlinear Schr\"{o}dinger equation for designing such second order low-regularity integrators in \cite{lownls2}. Together with the loss from the Burgers-nonlinearity, one major concern of the Picard iteration approach is that the second order convergence rate is achieved in $H^\gamma$-norm for solutions in as least $H^{\gamma+4}$ space. See a precise construction in such approach for the KdV equation in the recent work \cite{Schratznew}.

In this work, instead of the Picard iteration, we consider the construction for a second order low-regularity integrator (LRI) for the KdV (\ref{model}) by simply the Taylor's expansion in (\ref{v eq}) as
$$v(t_n+s,x)\approx v(t_n,x)+s\partial_tv(t_n,x).$$
This strategy has been outlined in \cite{kdv-kath}, and it leads to an explicit and efficient scheme.
We are aiming to establish the optimal convergence result of this second order LRI scheme by showing that
$$\|u(t_n,\cdot)-u^n\|_{H^\gamma}\lesssim \tau^2,$$
up to some fixed time for $u\in H^{\gamma+4}$ for any $\gamma\geq0$. The proof relies on a key fact that
$$
\partial_t^2v=\frac32\fe^{t\partial_x^3}\partial_{x}^2
\left(\fe^{-t\partial_x^3}\partial_xv\right)^2
+\frac13\fe^{t\partial_x^3}\partial_{x}^2
\left(\fe^{-t\partial_x^3}v\right)^3,
$$
 and some tools from the harmonic analysis  to overcome the absence of algebraic property of $H^\gamma$ when $0\leq\gamma\leq\frac12$.
As shown by our theoretical estimate,  this simplified strategy (compared with Picard iteration) is able to get the desired second order accuracy with only \emph{four} additional bounded spatial derivatives, which is better than what was conjectured in \cite{kdv-kath}, and it matches with the best result that one could expect from the direct Picard iteration as we have explained above and from the recent investigation in \cite{Schratznew}. We shall show by numerical results that the regularity requirement for this second order LRI is sharp.
Compared with other classical second order numerical methods in the literature, for example the Strang splitting method (with exact solutions at the Burgers' step) \cite{splitting1,splitting2} that needs $u\in H^{\gamma+5}$ for the second order convergence rate in $H^\gamma$ for $\gamma\geq1$, the presented LRI saves one spatial derivative and further reduces the requirement of the regularity. Moreover, one should note that the exact solutions at the Burgers' step in the Strang splitting scheme require a nonlinear solver which makes the practical scheme costly.
Hence, the presented LRI is particularly more efficient for solving the KdV (\ref{model}) under rough data, which will be illustrated through numerical experiments in the end.

The rest of the paper is organized as follows. In Section \ref{sec:scheme}, we present the detailed scheme of LRI from \cite{kdv-kath} and give its main convergence theorem. In Section \ref{sec:proof}, we give the proof of the convergence result. Numerical confirmations are reported in Section \ref{sec:numerical} and conclusions are drawn in Section \ref{sec:conclusion}.

\section{Numerical scheme}\label{sec:scheme}
In this section, we present the detailed numerical scheme of the second order exponential-type integrator as outlined in \cite{kdv-kath} and give
its main convergence theorem. In the following for simplicity, we shall assume that the zero-mode/average of the initial value of the (\ref{model}) is zero. Otherwise, we may consider
$$
\tilde u:= u\left(t,x-\widehat{(u_0)}_0 t\right)-\widehat{(u_0)}_0
$$
instead, and one can note that $\tilde u$ also obeys the same KdV equation of \eqref{model} with initial data $\tilde u_0:=u_0-\widehat{(u_0)}_0$. Here we denote $\widehat{(u_0)}_l$ for $l\in\bZ$ as the Fourier coefficients of $u_0$.

With the twisted variable
\begin{equation}\label{twist}v(t,x):=\fe^{t\partial_x^3}u(t,x),\quad t\geq0,\ x\in\bT,\end{equation}
the KdV equation (\ref{model}) becomes:
\begin{equation}\label{vt eq}
  \partial_t v(t,x)=\frac{1}{2}\fe^{t\partial_x^3}
  \partial_x\left(\fe^{-t\partial_x^3}v(t,x)\right)^2,\quad
  t\geq0,\ x\in\bT.
\end{equation}
For $n\geq0$ and $t=t_n+s$, plugging $v(t_n+s,x)\approx v(t_n,x)+s\partial_tv(t_n,x)$ into the Duhamel's formula (\ref{v eq}), we get
\begin{align}
 v(t_{n+1},x)\approx& v(t_n,x)+\frac{1}{2}\int_0^\tau
 \fe^{(t_n+s)\partial_x^3}\partial_x\left[\fe^{-(t_n+s)\partial_x^3}
 \big(v(t_n,x)+s\partial_tv(t_n,x)\big)\right]^2ds\nonumber\\
 \approx&v(t_n,x)+\frac{1}{2}I_1(t_n,x)+I_2(t_n,x),\label{v app}
\end{align}
where
\begin{subequations}\label{I_j def}
\begin{align}
 &I_1(t_n,x):=\int_0^\tau\fe^{(t_n+s)\partial_x^3}
 \partial_x\left(\fe^{-(t_n+s)\partial_x^3}v(t_n,x)\right)^2ds,\\
 & I_2(t_n,x):=\int_0^\tau\fe^{(t_n+s)\partial_x^3}
 \partial_x\left[
 s\left(\fe^{-(t_n+s)\partial_x^3}v(t_n,x)\right)
 \left(\fe^{-(t_n+s)\partial_x^3}\partial_tv(t_n,x)\right)\right]ds.
\end{align}
\end{subequations}
By calculation in the Fourier frequency space and noting the key relation
$$(k_1+k_2)^3-k_1^3-k_2^3=3(k_1+k_2)k_1k_2,$$
the terms in $I_1$ and $I_2$ can be exactly integrated in the physical space. We refer the readers to \cite{kdv-kath} for those detailed calculations. The zero Fourier mode of $I_1$ or $I_2$ is clearly zero, and for the other modes, here we directly write down their explicitly formulas in the physical space:
$$I_1(t_n,x)=\sum_{l\neq0}\widehat{(J_{1})}_l(t_n)\fe^{ilx},\qquad
I_2(t_n,x)=\sum_{l\neq0}\widehat{(J_{2})}_l(t_n)\fe^{ilx},$$
where $\widehat{(J_{1})}_l$ and $\widehat{(J_{2})}_l$ denote  respectively  the Fourier coefficients of functions $J_1$ and $J_2$
\begin{align*}
 J_1(t_n,x):=\frac{1}{3}\fe^{t_{n+1}\partial_x^3}\left(
 \fe^{-t_{n+1}\partial_x^3}\partial_x^{-1}v(t_n,x)\right)^2
 -\frac{1}{3}\fe^{t_n\partial_x^3}\left(\fe^{-t_n\partial_x^3}
 \partial_x^{-1}v(t_n,x)\right)^2,
\end{align*}
and
\begin{align*}
 J_2(t_n,x):=&\frac{\tau}{3}\fe^{t_{n+1}\partial_x^3}\left(
 \fe^{-t_{n+1}\partial_x^3}\partial_x^{-1}v(t_n,x)\right)
 \left(\fe^{-t_{n+1}\partial_x^3}\partial_x^{-1}\partial_tv(t_n,x)\right)\\
 &-\frac{1}{9}\fe^{t_{n+1}\partial_x^3}\partial_x^{-1}
 \left(\fe^{-t_{n+1}\partial_x^3}\partial_x^{-2}v(t_n,x)\right)
 \left(\fe^{-t_{n+1}\partial_x^3}
 \partial_x^{-2}\partial_tv(t_n,x)\right)\\
 &+\frac{1}{9}\fe^{t_{n}\partial_x^3}\partial_x^{-1}
 \left(\fe^{-t_n\partial_x^3}\partial_x^{-2}v(t_n,x)\right)
 \left(\fe^{-t_n\partial_x^3}
 \partial_x^{-2}\partial_tv(t_n,x)\right).
\end{align*}
Here the operator $\partial_x^{-m}\ (m\in\bN)$ is defined for function $f(x)
\in L^2(\bT)$ as
$$\partial_x^{-m}f(x):=\sum_{l\neq0}(il)^{-m}
\widehat{f}_l\fe^{ilx},\quad x\in \bT.$$
Noting (\ref{vt eq}), we substitute
$$\partial_tv(t_n,x)=\frac{1}{2}\fe^{t_n\partial_x^3}
  \partial_x\left(\fe^{-t_n\partial_x^3}v(t_n,x)\right)^2$$
 into $J_2(t_n,x)$, then the approximation (\ref{v app}) forms an update from $t_n$ to $t_{n+1}$ for $v(t,x)$, and by reverting the change of variable (\ref{twist}) we get the scheme for $u(t,x)$.

The detailed \emph{second order low-regularity integrator (LRI)} for solving the KdV equation (\ref{model}) then reads: denote
$u^n=u^n(x)\approx u(t_n,x)$ as the numerical solution for $n\geq0$, $I_j^n=I_j^n(x)\approx \fe^{-t_{n+1}\partial_x^3}I_j(t_n,x)$ for $j=1,2$ as approximations of (\ref{I_j def}) and let $u^0=u_0(x)$, then
\begin{align}\label{LRI}
u^{n+1}=\fe^{-\tau\partial_x^3}u^n+\frac{1}{2}I_1^n
+I_2^n,\quad n=0,1,\ldots,
\end{align}
where
$$I_1^n=\sum_{l\neq0}\widehat{(J_{1}^n)}_l\fe^{ilx},\quad
I_2^n=\sum_{l\neq0}\widehat{(J_{2}^n)}_l\fe^{ilx},$$
and
\begin{align*}
 J_1^n:=&\frac{1}{3}\left(
 \fe^{-\tau\partial_x^3}\partial_x^{-1}u^n\right)^2
 -\frac{1}{3}\fe^{-\tau\partial_x^3}\left(
 \partial_x^{-1}u^n\right)^2,\\
 J_2^n:=&\frac{\tau}{6}\left(
 \fe^{-\tau\partial_x^3}\partial_x^{-1}u^n\right)
 \left(\fe^{-\tau\partial_x^3}(u^n)^2-\widehat{((u^n)^2)}_0
 \right)-\frac{1}{18}\partial_x^{-1}
 \left(\fe^{-\tau\partial_x^3}\partial_x^{-2}u^n\right)
 \left(\fe^{-\tau\partial_x^3}
 \partial_x^{-1}(u^n)^2\right)\\
 &+\frac{1}{18}\fe^{-\tau\partial_x^3}\partial_x^{-1}
 \left(\partial_x^{-2}u^n\right)
 \left( \partial_x^{-1}(u^n)^2\right).
\end{align*}

The above LRI (\ref{LRI}) is explicit and preserves the mass of the KdV equation (\ref{model}) at the discrete level, i.e.
$$\int_\bT u^n(x)dx\equiv\int_\bT u_0(x)dx,\quad n=0,1,\ldots. $$
In practice, the Fourier series in the scheme are truncated to an integer $N>0$ and the computational of the Fourier coefficients are obtained by the trigonometric quadrature \cite{Shen}. Then the fully discretized LRI scheme is efficient thanks to the fast Fourier transform with computational cost $O(N\log N)$ at each time level, and it has no CFL-type conditions.

Now, we state the convergence theorem of the presented (semi-discretized) LRI method (\ref{LRI}) as the \textbf{main result} of the paper and the proof is given in the next section.

\begin{theorem}\label{thm:convergence}
Let $u^n$ be the numerical solution of the KdV (\ref{model}) obtained from the LRI scheme (\ref{LRI}) up to some fixed time $T>0$. Under assumption that $u_0\in H^{\gamma+4}(\bT)$ for some $\gamma\geq0$,  there exists constants $\tau_0,C>0$ such that for  any $0<\tau\leq \tau_0$, we have
\begin{equation}\label{main result}
  \|u(t_n,\cdot)-u^n\|_{H^\gamma}\leq C\tau^2,\quad n=0,1\ldots,\frac{T}{\tau},
\end{equation}
 where the constants $\tau_0$ and $C$ depend only on $T$ and $\|u\|_{L^\infty((0,T);H^{\gamma+4})}$.
\end{theorem}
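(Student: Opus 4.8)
The plan is to work throughout with the twisted variable $v=\fe^{t\partial_x^3}u$, on which the scheme acts directly, and to prove \eqref{main result} for $v$; since each $\fe^{\pm t\partial_x^3}$ is an isometry of $H^\gamma(\bT)$, the bound for $u$ is then immediate. Denote by $\Phi_\tau$ the one-step map defined by \eqref{v app}--\eqref{LRI} and set $e^n:=v(t_n)-v^n$. I would decompose the error in the usual Lady Windermere way,
$$
e^{n+1}=\big(v(t_{n+1})-\Phi_\tau(v(t_n))\big)+\big(\Phi_\tau(v(t_n))-\Phi_\tau(v^n)\big),
$$
treating the first bracket as a local (consistency) error and the second as a stability term, and then close the estimate by a discrete Gronwall inequality. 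The whole argument runs as an induction on $n$ that simultaneously propagates the a priori bound $\|v^n\|_{H^\gamma}\lesssim1$ (needed to apply the nonlinear estimates), valid as long as $n\tau\leq T$ and $\tau\leq\tau_0$.

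For the local error I would subtract the scheme from the exact Duhamel formula \eqref{v eq}; the only discrepancy is the replacement of $v(t_n+s)$ inside the quadratic nonlinearity by its first-order Taylor polynomial $v(t_n)+s\partial_tv(t_n)$ (and the dropping of the resulting $s^2$-term). Expanding the square, the dominant contribution is governed by the Taylor remainder $R_s=v(t_n+s)-v(t_n)-s\partial_tv(t_n)=\int_0^s(s-\sigma)\partial_t^2v(t_n+\sigma)\,d\sigma$, which carries a factor $s^2$ and hence contributes $O(\tau^3)$ per step, i.e.\ $O(\tau^2)$ after summation. The sharp derivative count comes from the stated identity for $\partial_t^2v$: it places two derivatives outside a product one of whose factors is $\fe^{-t\partial_x^3}\partial_xv=\partial_xu$, so that together with the single $\partial_x$ of the Burgers term in \eqref{v eq} one meets exactly four spatial derivatives on $u$. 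Since the exponentials are isometries, estimating $\|\partial_x(R_s\cdot g)\|_{H^\gamma}\lesssim\|R_s\cdot g\|_{H^{\gamma+1}}$ with $g$ a smooth factor built from $u(t_n+\cdot)$ reduces, via the algebra property of $H^{\gamma+1}$ (valid for $\gamma\geq0$), to bounding $\partial_x^4u$ in $H^\gamma$, which is exactly what $u\in H^{\gamma+4}$ provides; the constant then depends only on $T$ and $\|u\|_{L^\infty((0,T);H^{\gamma+4})}$.

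The stability term is more delicate, and here lies the main obstacle. A naive application of the triangle inequality to $\Phi_\tau(v(t_n))-\Phi_\tau(v^n)$ fails: the leading nonlinear increment behaves like $\tfrac{\tau}{2}\partial_x(\bar u\,e^n)$, with $\bar u$ a bounded factor coming from $v(t_n)$ (a discrete Burgers term), and since $e^n$ is controlled only in $H^\gamma$, the outer $\partial_x$ would demand $e^n\in H^{\gamma+1}$, a derivative that is simply not available. To recover the missing derivative I would estimate the error in the energy form
$$
\|e^{n+1}\|_{H^\gamma}^2=\|\fe^{-\tau\partial_x^3}e^n\|_{H^\gamma}^2+2\big\langle \fe^{-\tau\partial_x^3}e^n,\,N^n\big\rangle_{H^\gamma}+\|N^n\|_{H^\gamma}^2,
$$
where $N^n$ collects the local error together with the nonlinear differences; the first term equals $\|e^n\|_{H^\gamma}^2$ by unitarity, while in the cross term the dangerous factor $\tau\partial_x(\bar u\,e^n)$ is integrated by parts against $e^n$, so that the antisymmetry of $\partial_x$ converts the lost derivative into a harmless factor $\partial_x\bar u$ (the genuinely quadratic-in-$e^n$ contributions are higher order under the induction hypothesis). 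For $\gamma>\tfrac12$ this is routine, but for $0\leq\gamma\leq\tfrac12$, where $H^\gamma$ is not an algebra, the commutator $[\langle\partial_x\rangle^\gamma,\bar u]$ and the residual products must be controlled by Kato--Ponce type fractional Leibniz and commutator estimates together with Sobolev embeddings — precisely the harmonic-analytic tools alluded to in the introduction. Exploiting in addition the smoothing factors $\partial_x^{-1},\partial_x^{-2}$ produced by the exact $s$-integration (via the resonance relation $(k_1+k_2)^3-k_1^3-k_2^3=3(k_1+k_2)k_1k_2$) to tame the remaining terms, this should yield $\|e^{n+1}\|_{H^\gamma}^2\leq(1+C\tau)\|e^n\|_{H^\gamma}^2+C\tau^5$.

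Finally, the discrete Gronwall inequality applied to this recursion over $n\leq T/\tau$ gives $\|e^n\|_{H^\gamma}\leq C\tau^2$, and a small enough $\tau_0$ keeps $\|v^n\|_{H^\gamma}$ inside the fixed ball on which all the nonlinear estimates were performed, closing the induction. I expect the heart of the work to be exactly this low-regularity stability analysis for $0\leq\gamma\leq\tfrac12$: neutralizing the Burgers-type derivative loss through the energy/integration-by-parts mechanism while proving the underlying product and commutator estimates in a Sobolev space that fails to be an algebra.
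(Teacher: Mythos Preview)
Your overall architecture---twisted variable, Lady Windermere splitting into local error plus stability, energy/integration-by-parts to neutralize the Burgers derivative loss, Kato--Ponce commutator estimates for the non-algebra range, and discrete Gronwall---matches the paper's proof, and your local-error discussion is essentially the paper's Lemma~\ref{lem:L}. There is, however, one genuine gap in the stability part.

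You propose to close the argument by an induction that simultaneously propagates the bound $\|v^n\|_{H^\gamma}\lesssim 1$. This is not enough when $\gamma$ is small. In the energy cross term $\big\langle J^\gamma e^n,\,J^\gamma\!\int_0^\tau\partial_x\big(\fe^{-(t_n+s)\partial_x^3}e^n\cdot \fe^{-(t_n+s)\partial_x^3}g_1\big)ds\big\rangle$ with $g_1=v(t_n)+v^n$, the integration-by-parts/commutator mechanism (the paper's Lemma~\ref{lm3.5}(i) and \eqref{20.45}) requires $g_1$ in a strictly higher norm, roughly $H^{\gamma+\gamma_1+1}$ with $\gamma_1>\tfrac12$. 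Writing $g_1=2v(t_n)-e^n$ does not save you: the quadratic-in-$e^n$ piece $\langle J^\gamma e^n,J^\gamma\partial_x(e^n)^2\rangle$ is controlled by $\|e^n\|_{H^\gamma}^3$ only for $\gamma>\tfrac32$ (Lemma~\ref{lm3.5}(ii)), and the resonance-smoothing estimate (Lemma~\ref{lem:bi-est}) still needs one factor in $H^{\gamma+\gamma_1}$ when $\gamma\le\tfrac12$. In short, for $0\le\gamma\le\tfrac32$ the stability step needs $v^n$ bounded in $H^{\gamma_0}$ with $\gamma_0>\tfrac32$ (in practice $\gamma_0=\gamma+2$), not merely in $H^\gamma$.

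The paper supplies exactly this missing ingredient as a separate \emph{a priori} estimate (Lemma~\ref{lem:est-vn}): for any $\gamma_0>\tfrac32$ with $v_0\in H^{\gamma_0+2}$ one has $\|v^n\|_{H^{\gamma_0}}\le C$. It is proved by running a preliminary, self-contained convergence argument in the stronger norm $H^{\gamma_0}$, where the algebra property holds and the cubic term $\|e^n\|_{H^{\gamma_0}}^3$ is legitimate; this yields only a suboptimal rate $\|v(t_n)-v^n\|_{H^{\gamma_0}}\lesssim\tau^{1/2}$, but that already gives boundedness. Since $u_0\in H^{\gamma+4}$ one may take $\gamma_0=\gamma+2>\tfrac32$, and this higher-norm control of $v^n$ is then fed into the stability lemma (the paper's Lemma~\ref{lem:stab}) to obtain $\|\Phi^n(v(t_n))-\Phi^n(v^n)\|_{H^\gamma}\le(1+C\tau)\|v(t_n)-v^n\|_{H^\gamma}$ directly as a norm inequality, after which Gronwall closes as you describe. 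Your induction would work verbatim for $\gamma>\tfrac32$; for the full range $\gamma\ge0$ you need this extra bootstrap step.
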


We shall show later by numerical results that the estimate and the regularity assumption in the above convergence theorem for LRI are sharp. The convergence result of LRI is indeed better than conjectured in \cite{kdv-kath}.

\section{Convergence analysis}\label{sec:proof}

In this section, we give the rigorous proof of the main result. To do so, we shall firstly introduce some tools from harmonic analysis in subsection \ref{subsec1}, and then establish the stability result and the local error estimate,  respectively in subsection \ref{subsec3} and subsection \ref{subsec2}. The final proof of  Theorem \ref{thm:convergence} is given in subsection \ref{subsec4}.

\subsection{Some notations and tools}\label{subsec1}
For convenience, we introduce some notations and definitions, some of which are employed from \cite{CKSTT-03-KDV}. We use $A\lesssim B$ or $B\gtrsim A$ to denote the statement that $A\leq CB$ for some absolute constant $C>0$ which may
vary from line to line but independent of $\tau$ or $n$, and we denote $A\sim B$ for
$A\lesssim B\lesssim A$.
We define
$(d\xi)$ to be  the normalized counting measure on
$\Z$ such that
\begin{equation*}
\displaystyle\int
a(\xi)\,(d\xi)
=
\sum\limits_{\xi\in
\Z} a(\xi).
\end{equation*}
The Fourier transform of a function $f$ on $\T$ is defined by
$$
\mathcal{F}(f)(\xi)=\widehat{f}(\xi)=\displaystyle\frac{1}{2\pi}\int_\bT \fe^{- i  x\xi}f(x)\,dx,
$$
and thus the Fourier inversion formula
$$
f(x)=\displaystyle\int \fe^{ i  x\xi} \widehat{f}(\xi)\,(d\xi).
$$
Then the following usual properties of the Fourier transform hold:
\begin{eqnarray*}
 &\|f\|_{L^2(\bT)}
 = \big\|\widehat{f}\big\|_{L^2((d \xi))} \quad \mbox{(Plancherel)}; \\
 &\displaystyle\langle f,g\rangle=\int_\bT f(x)\overline{g(x)}\,dx
 = \displaystyle\int \widehat{f}(\xi)\overline{\widehat{g}(\xi)}\,(d\xi)\quad \mbox{(Parseval)} ; \\
 & \widehat{(fg)}(\xi)=\displaystyle\int
  \widehat{f}(\xi-\xi_1)\widehat{g}(\xi_1) \,(d\xi_1) \quad \mbox{(Convolution)}.
\end{eqnarray*}
The Sobolev space $H^s(\bT)$ for $s\geq0$ has the equivalent norm,
$$
\big\|f\big\|_{H^s(\bT)}=
\big\|J^sf\big\|_{L^2(\bT)}=\left\|\langle\xi\rangle^{s}\widehat{f}(\xi)\right\|_{L^2((d\xi))},
$$
where we denote the operator
$$J^s=(1-\partial_{xx})^\frac s2,\quad
\mbox{and}\quad \langle\cdot\rangle=(1+|\cdot|^2)^{1/2}.$$

As a tool to overcome the absence of the algebraic property of $H^s$ when $s\leq\frac12$, we will frequently call the following Kato-Ponce inequality, where  a general form was proved in \cite{Kato-Ponce} originally and an important progress in the endpoint case was made in \cite{BoLi-KatoPonce, Li-KatoPonce} very recently.
\begin{lemma}\label{lem:kato-Ponce}(Kato-Ponce inequality) The following inequalities hold:
\begin{itemize}
  \item[(i)]
  For any $\gamma\ge 0, \gamma_1>\frac12$, $f,g\in H^{\gamma}\cap  H^{\gamma_1}$, then
\begin{align*}
\|J^\gamma (fg)\|\lesssim \|f\|_{H^\gamma}\|g\|_{H^{\gamma_1}}+ \|f\|_{H^{\gamma_1}}\|g\|_{H^{\gamma}}.
\end{align*}
In particular, if $\gamma>\frac12$, then
\begin{align*}
\|J^\gamma (fg)\|\lesssim \|f\|_{H^\gamma}\|g\|_{H^{\gamma}}.
\end{align*}
  \item[(ii)]
For any  $\gamma\ge 0, \gamma_1>\frac12$, $f\in H^{\gamma+\gamma_1} ,g\in H^{\gamma}$, then
\begin{align*}
\|J^\gamma (fg)\|\lesssim \|f\|_{H^{\gamma+\gamma_1}}\|g\|_{H^{\gamma}}.
\end{align*}
\end{itemize}
\end{lemma}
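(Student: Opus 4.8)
The plan is to prove both inequalities directly on the Fourier side, exploiting that $H^s(\bT)$ is $L^2$-based so that no Littlewood--Paley machinery or Coifman--Meyer multiplier theory is needed; the entire argument reduces to the convolution identity for $\widehat{fg}$ recorded above, an elementary triangle-type bound for $\langle\cdot\rangle^\gamma$, and Young's inequality on the counting measure $(d\xi)$. The one structural input is the following consequence of the hypothesis $\gamma_1>\frac12$: since $\sum_{\xi\in\Z}\langle\xi\rangle^{-2\gamma_1}<\infty$ precisely when $\gamma_1>\frac12$, the weight $\langle\cdot\rangle^{-\gamma_1}$ belongs to $L^2((d\xi))$, so that by Cauchy--Schwarz
$$\|\widehat h\|_{L^1((d\xi))}=\int\langle\xi\rangle^{-\gamma_1}\,\langle\xi\rangle^{\gamma_1}|\widehat h(\xi)|\,(d\xi)\le\big\|\langle\cdot\rangle^{-\gamma_1}\big\|_{L^2((d\xi))}\,\|h\|_{H^{\gamma_1}}\lesssim\|h\|_{H^{\gamma_1}}.$$
This is the only place the hypothesis $\gamma_1>\frac12$ enters.

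For part (i) I would start from $\widehat{J^\gamma(fg)}(\xi)=\langle\xi\rangle^\gamma\int\widehat f(\xi-\xi_1)\widehat g(\xi_1)\,(d\xi_1)$ and apply the Peetre-type bound $\langle\xi\rangle^\gamma\lesssim\langle\xi-\xi_1\rangle^\gamma+\langle\xi_1\rangle^\gamma$, which holds for every $\gamma\ge0$ because $\langle\xi\rangle\le2\max\{\langle\xi-\xi_1\rangle,\langle\xi_1\rangle\}$. This splits $|\widehat{J^\gamma(fg)}(\xi)|$ into two convolutions,
$$\big(\langle\cdot\rangle^\gamma|\widehat f|\big)*|\widehat g|\quad\text{and}\quad|\widehat f|*\big(\langle\cdot\rangle^\gamma|\widehat g|\big).$$
Taking $L^2((d\xi))$ norms and using Young's inequality $\|K*h\|_{L^2}\le\|K\|_{L^2}\|h\|_{L^1}$ on the first piece bounds it by $\|\langle\cdot\rangle^\gamma\widehat f\|_{L^2}\,\|\widehat g\|_{L^1}=\|f\|_{H^\gamma}\|\widehat g\|_{L^1}\lesssim\|f\|_{H^\gamma}\|g\|_{H^{\gamma_1}}$ via the display above, while the other orientation $\|K*h\|_{L^2}\le\|K\|_{L^1}\|h\|_{L^2}$ on the second piece gives $\|\widehat f\|_{L^1}\,\|\langle\cdot\rangle^\gamma\widehat g\|_{L^2}\lesssim\|f\|_{H^{\gamma_1}}\|g\|_{H^\gamma}$; adding the two proves (i). The ``in particular'' case is then immediate on taking $\gamma_1=\gamma$, which is admissible exactly when $\gamma>\frac12$.

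Part (ii) requires redistributing the weights so that all extra regularity lands on $f$ while $g$ is measured only in $H^\gamma$; here the naive symmetric split of (i) fails, since it would produce a factor $\|g\|_{H^{\gamma_1}}$ that cannot be controlled by $\|g\|_{H^\gamma}$ when $\gamma_1>\gamma$. Instead, on the first convolution $\big(\langle\cdot\rangle^\gamma|\widehat f|\big)*|\widehat g|$ I would apply Young in the orientation $\|K*h\|_{L^2}\le\|K\|_{L^1}\|h\|_{L^2}$, bounding it by $\|\langle\cdot\rangle^\gamma\widehat f\|_{L^1}\,\|\widehat g\|_{L^2}$; writing $\langle\cdot\rangle^\gamma=\langle\cdot\rangle^{-\gamma_1}\langle\cdot\rangle^{\gamma+\gamma_1}$ and using Cauchy--Schwarz as before gives $\|\langle\cdot\rangle^\gamma\widehat f\|_{L^1}\lesssim\|f\|_{H^{\gamma+\gamma_1}}$, while $\|\widehat g\|_{L^2}=\|g\|_{L^2}\le\|g\|_{H^\gamma}$. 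The second convolution $|\widehat f|*\big(\langle\cdot\rangle^\gamma|\widehat g|\big)$ is handled exactly as in (i), yielding $\|\widehat f\|_{L^1}\,\|g\|_{H^\gamma}\lesssim\|f\|_{H^{\gamma_1}}\|g\|_{H^\gamma}\le\|f\|_{H^{\gamma+\gamma_1}}\|g\|_{H^\gamma}$ since $\gamma\ge0$. Summing gives (ii).

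The argument is almost entirely routine once the Fourier-side setup is fixed; the only point that demands care — and the ``main obstacle'' in the sense of the single place where a wrong choice derails the estimate — is the asymmetric bookkeeping in part (ii): for each of the two paraproduct-type pieces one must decide whether to spend the square-summable weight $\langle\cdot\rangle^{-\gamma_1}$ on $f$ or on $g$, and Young's inequality must be invoked in the matching orientation ($L^1*L^2$ versus $L^2*L^1$) so that the $L^1$ factor is precisely the one carrying the decaying weight. I note that this $L^2$-based proof sidesteps the genuinely delicate endpoint analysis of \cite{BoLi-KatoPonce,Li-KatoPonce}, which is required only for $L^p$ spaces with $p\neq2$ or for the borderline exponent $\gamma_1=\frac12$, neither of which occurs in the statement as worded.
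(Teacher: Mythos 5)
Your proof is correct. Note, however, that the paper does not actually prove Lemma \ref{lem:kato-Ponce} at all: it is quoted as a known result, with the general form attributed to \cite{Kato-Ponce} and the endpoint refinements to \cite{BoLi-KatoPonce, Li-KatoPonce}, references which establish fractional Leibniz rules in general $L^p$ settings via Littlewood--Paley and paraproduct machinery. Your argument is therefore a genuinely different route: you exploit the $L^2$-based periodic setting so that everything reduces to the Peetre bound $\langle\xi\rangle^\gamma\lesssim\langle\xi-\xi_1\rangle^\gamma+\langle\xi_1\rangle^\gamma$, Young's inequality on the counting measure in its two orientations ($L^2\ast L^1\to L^2$ and $L^1\ast L^2\to L^2$), and the single structural fact that $\gamma_1>\frac12$ makes $\langle\cdot\rangle^{-\gamma_1}$ square-summable, i.e.\ $\|\widehat h\|_{L^1((d\xi))}\lesssim\|h\|_{H^{\gamma_1}}$. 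The asymmetric bookkeeping you flag in part (ii) is indeed the one place requiring care, and your resolution --- spending the summable weight on $f$ via $\|\langle\cdot\rangle^\gamma\widehat f\|_{L^1((d\xi))}\lesssim\|f\|_{H^{\gamma+\gamma_1}}$ and invoking Young in the matching orientation --- is exactly the right substitute for the $L^\infty$--$L^2$ H\"older step a paraproduct proof would use (compare the $\Omega_1$/$\Omega_2$ splitting the paper carries out by hand when it proves the commutator variant, Lemma \ref{lem:kato-Ponce-2}). What your route buys is a short, self-contained proof of precisely the statement the paper needs, with no appeal to deep endpoint results; what the citation route buys is generality (mixed $L^p$ exponents, commutators, the Euclidean setting). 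Your closing remark is also accurate: the delicate content of \cite{BoLi-KatoPonce, Li-KatoPonce} is invisible at $p=2$ with $\gamma_1>\frac12$.
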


Moreover, we will need the following specific commutator estimate. Here the commutator is defined as $[A,B]=AB-BA$.
\begin{lemma}\label{lem:kato-Ponce-2}
Let $f,g$ be the Schwartz functions. If $0\le \gamma\le 1$, then the following inequality holds for any $\gamma_1>\frac12$:
\begin{align*}
\big\|[J^\gamma, f]\partial_x g\big\|_{L^2}\le C\|f\|_{H^{1+\gamma_1}}\|g\|_{H^\gamma}.
\end{align*}
Furthermore, if $\gamma>1$, then
\begin{align*}
\big\|[J^\gamma, f]\partial_x g\big\|_{L^2}\le C \Big(\|f\|_{H^{\gamma+\gamma_1}}\|g\|_{H^1}+\|f\|_{H^{1+\gamma_1}}\|g\|_{H^\gamma}\Big),
\end{align*}
or
\begin{align*}
\big\|[J^\gamma, f]\partial_x g\big\|_{L^2}\le C \Big(\|f\|_{H^{\gamma}}\|g\|_{H^{1+\gamma_1}}+\|f\|_{H^{1+\gamma_1}}\|g\|_{H^\gamma}\Big).
\end{align*}
\end{lemma}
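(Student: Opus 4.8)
The plan is to pass to the Fourier side and reduce the estimate to a pointwise bound on the commutator multiplier followed by Young's convolution inequality. Writing $\eta=\xi-\xi_1$ for the frequency carried by $f$ and $\xi_1$ for that carried by $g$, the convolution formula gives
$$\widehat{[J^\gamma,f]\partial_x g}(\xi)=\int\big(\langle\xi\rangle^\gamma-\langle\xi_1\rangle^\gamma\big)\,(i\xi_1)\,\widehat f(\eta)\,\widehat g(\xi_1)\,(d\xi_1),$$
so with $F:=|\widehat f|$ and $G:=|\widehat g|$ it suffices to bound the $L^2((d\xi))$-norm of the nonnegative kernel $\int m(\xi,\xi_1)F(\eta)G(\xi_1)\,(d\xi_1)$, where $m(\xi,\xi_1):=\big|\langle\xi\rangle^\gamma-\langle\xi_1\rangle^\gamma\big|\,|\xi_1|$. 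The whole argument then rests on showing that, after splitting the $\xi_1$–integration into the ``$g$–high'' region $\{|\eta|\le\frac12|\xi_1|\}$ and the ``$f$–high'' region $\{|\eta|>\frac12|\xi_1|\}$, the multiplier $m$ is dominated by products $\langle\eta\rangle^a\langle\xi_1\rangle^b$ that can be absorbed into the stated norms.

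For $0\le\gamma\le1$ I would estimate $m$ as follows. On the $g$–high region $\langle\xi\rangle\sim\langle\xi_1\rangle$, so the mean value theorem applied to $s\mapsto\langle s\rangle^\gamma$ (whose derivative is $O(\langle s\rangle^{\gamma-1})$) yields $|\langle\xi\rangle^\gamma-\langle\xi_1\rangle^\gamma|\lesssim\langle\xi_1\rangle^{\gamma-1}|\eta|$, hence $m\lesssim\langle\eta\rangle\langle\xi_1\rangle^\gamma$. On the $f$–high region one has $\langle\xi\rangle,\langle\xi_1\rangle\lesssim\langle\eta\rangle$, so the crude subadditive bound $|\langle\xi\rangle^\gamma-\langle\xi_1\rangle^\gamma|\lesssim\langle\eta\rangle^\gamma$ together with $|\xi_1|\lesssim\langle\xi_1\rangle^\gamma\langle\eta\rangle^{1-\gamma}$ again gives $m\lesssim\langle\eta\rangle\langle\xi_1\rangle^\gamma$. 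Thus in both regions the kernel is controlled by $(\langle\cdot\rangle F)*(\langle\cdot\rangle^\gamma G)$, and Young's inequality $\|h_1*h_2\|_{L^2}\le\|h_1\|_{L^1}\|h_2\|_{L^2}$ reduces matters to $\|\langle\cdot\rangle F\|_{L^1}\|\langle\cdot\rangle^\gamma G\|_{L^2}$. The second factor is exactly $\|g\|_{H^\gamma}$, while for the first I would write $\langle\eta\rangle F(\eta)=\langle\eta\rangle^{1+\gamma_1}F(\eta)\cdot\langle\eta\rangle^{-\gamma_1}$ and apply Cauchy–Schwarz, so that $\|\langle\cdot\rangle F\|_{L^1}\lesssim\|f\|_{H^{1+\gamma_1}}\|\langle\cdot\rangle^{-\gamma_1}\|_{L^2((d\xi))}$; this is exactly where the hypothesis $\gamma_1>\frac12$ enters, since $\langle\cdot\rangle^{-\gamma_1}\in L^2(\bZ)$ precisely when $2\gamma_1>1$. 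This proves the first inequality.

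For $\gamma>1$ the subadditive bound fails, and this is the only genuinely different point. On the $g$–high region the mean value estimate $m\lesssim\langle\eta\rangle\langle\xi_1\rangle^\gamma$ survives verbatim and again produces the term $\|f\|_{H^{1+\gamma_1}}\|g\|_{H^\gamma}$. On the $f$–high region I would instead use the sharper mean value bound $|\langle\xi\rangle^\gamma-\langle\xi_1\rangle^\gamma|\lesssim|\eta|\big(\langle\xi_1\rangle^{\gamma-1}+\langle\eta\rangle^{\gamma-1}\big)$, which splits $m$ into $\langle\eta\rangle\langle\xi_1\rangle^\gamma$ (handled as before) plus a genuinely new piece $m'\lesssim\langle\eta\rangle^\gamma\langle\xi_1\rangle$. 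The corresponding kernel is $(\langle\cdot\rangle^\gamma F)*(\langle\cdot\rangle G)$, and the two claimed variants simply correspond to the two ways of assigning the $L^1$ and $L^2$ roles in Young's inequality: placing $\langle\cdot\rangle^\gamma F$ in $L^1$ (so $\|\langle\cdot\rangle^\gamma F\|_{L^1}\lesssim\|f\|_{H^{\gamma+\gamma_1}}$ by the same Cauchy–Schwarz trick) and $\langle\cdot\rangle G$ in $L^2$ yields $\|f\|_{H^{\gamma+\gamma_1}}\|g\|_{H^1}$, whereas placing $\langle\cdot\rangle G$ in $L^1$ and $\langle\cdot\rangle^\gamma F$ in $L^2$ yields $\|f\|_{H^\gamma}\|g\|_{H^{1+\gamma_1}}$. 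Adding the surviving $g$–high contribution gives the two stated forms.

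The main obstacle I anticipate is purely the bookkeeping of the symbol estimate for $m$: getting the mean value theorem bounds on $|\langle\xi\rangle^\gamma-\langle\xi_1\rangle^\gamma|$ sharp in each frequency regime and, for $\gamma>1$, isolating the extra $\langle\eta\rangle^\gamma\langle\xi_1\rangle$ piece so that the derivative lost by the factor $\partial_x g$ is always reabsorbed either into $f$ (at the cost of $H^{1+\gamma_1}$ or $H^{\gamma+\gamma_1}$) or into $g$ (at $H^{1+\gamma_1}$). Everything else is a routine combination of Young's inequality and a single Cauchy–Schwarz step whose integrability is guaranteed exactly by $\gamma_1>\frac12$; one could equivalently organize the same computation through a Littlewood–Paley paraproduct decomposition, but on the torus the direct Fourier estimate above is cleaner since the Schwartz hypothesis makes all the sums absolutely convergent.
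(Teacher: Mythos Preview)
Your proof is correct and follows essentially the same approach as the paper: pass to the Fourier side, obtain a pointwise bound on the commutator symbol $\big|\langle\xi\rangle^\gamma-\langle\xi_1\rangle^\gamma\big|\,|\xi_1|$ by a case split on relative frequency sizes, and then convert this into the stated Sobolev bounds using $\gamma_1>\frac12$. The only cosmetic differences are that the paper splits according to $|\xi_2|\lessgtr\tfrac1{10}|\xi|$ rather than your comparison of the two input frequencies, and in the final step the paper returns to physical space and applies H\"older $L^\infty\!\cdot\!L^2$ together with the Sobolev embedding $H^{\gamma_1}\hookrightarrow L^\infty$, which is exactly the dual formulation of your Young $L^1*L^2$ plus Cauchy--Schwarz trick.
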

\begin{proof}
Taking the Fourier transform on $[J^\gamma, f]\partial_x g$, we get
\begin{align*}
\mathcal F\Big([J^\gamma, f]\partial_x g\Big)(\xi)=i\int_{\xi=\xi_1+\xi_2}\big(\langle \xi\rangle^\gamma-\langle \xi_2\rangle^\gamma\big)\xi_2 \widehat f(\xi_1)\widehat g(\xi_2)\,(d\xi_1).
\end{align*}
We assume that $\widehat f$ and $\widehat g$ are positive, otherwise one may replace them by $|\widehat f|$ and $|\widehat g|$. We could also assume that $\xi_1\ne 0 $ and $ \xi_2\ne 0$, otherwise the term in the above integral vanishes. Denote
$$
\Omega_1=\left\{(\xi,\xi_1,\xi_2):\ \xi=\xi_1+\xi_2,\  |\xi_2|\le \frac1{10}|\xi|\right\},\quad
\Omega_2=\left\{(\xi,\xi_1,\xi_2):\ \xi=\xi_1+\xi_2,\  |\xi_2|> \frac1{10}|\xi|\right\}.
$$
Then by Plancherel's identity,
\begin{align*}
\big\|[J^\gamma, f]\partial_x g\big\|_{L^2}\le \Big\|\int_{\Omega_1}\big(\langle \xi\rangle^\gamma-\langle \xi_2\rangle^\gamma\big)\xi_2 \widehat f(\xi_1)\widehat g(\xi_2)\,(d\xi_1)\Big\|_{L^2}
+\Big\|\int_{\Omega_2}\big(\langle \xi\rangle^\gamma-\langle \xi_2\rangle^\gamma\big)\xi_2 \widehat f(\xi_1)\widehat g(\xi_2)\,(d\xi_1)\Big\|_{L^2}.
\end{align*}

In $\Omega_1$: $ |\xi_2|\le \frac1{10}|\xi|$.  Then, $\frac9{10}|\xi|\le |\xi_1|\le \frac{11}{10}|\xi|$. When $0\le \gamma\le 1$,
$
\big|\langle \xi\rangle^\gamma-\langle \xi_2\rangle^\gamma\big||\xi_2|\lesssim \langle \xi_1\rangle|\xi_2|^\gamma.
$
Hence we have
\begin{align*}
\Big|\int_{\Omega_1}\big(\langle \xi\rangle^\gamma-\langle \xi_2\rangle^\gamma\big)\xi_2 \widehat f(\xi_1)\widehat g(\xi_2)\,(d\xi_1)\Big|\lesssim \int_{\xi=\xi_1+\xi_2}\langle \xi_1\rangle|\xi_2|^\gamma \widehat f(\xi_1)\widehat g(\xi_2)\,(d\xi_1)
=\mathcal F\Big(\langle \nabla\rangle f\cdot|\nabla|^sg\Big)(\xi).
\end{align*}
Hence, by Plancherel's identity and Sobolev's inequality,
\begin{align*}
\Big\|\int_{\Omega_1}\big(\langle \xi\rangle^\gamma-\langle \xi_2\rangle^\gamma\big)\xi_2 \widehat f(\xi_1)\widehat g(\xi_2)\,(d\xi_1)\Big\|_{L^2}
&\le \Big\|\mathcal F\Big(\langle \nabla\rangle f\cdot|\nabla|^sg\Big)(\xi)\Big\|_{L^2}\\
&\lesssim \big\|\langle \nabla\rangle f\cdot|\nabla|^sg\big\|_{L^2}\lesssim \big\|\langle \nabla\rangle f\big\|_{L^\infty}\big\||\nabla|^sg\big\|_{L^2}\\
&\lesssim \big\|\langle \nabla\rangle^{1+\gamma_1} f\big\|_{L^2}\big\||\nabla|^sg\big\|_{L^2}.
\end{align*}
When $\gamma>1$,  we have
$
\big|\langle \xi\rangle^\gamma-\langle \xi_2\rangle^\gamma\big||\xi_2|\lesssim \langle \xi_1\rangle^\gamma|\xi_2|.
$
Then similarly,
\begin{align*}
\Big\|\int_{\Omega_1}\big(\langle \xi\rangle^\gamma-\langle \xi_2\rangle^\gamma\big)\xi_2 \widehat f(\xi_1)\widehat g(\xi_2)\,(d\xi_1)\Big\|_{L^2}
&\lesssim \min\Big\{\big\|\langle \nabla\rangle^{\gamma+\gamma_1} f\big\|_{L^2}\big\|\nabla g\big\|_{L^2}
,\big\|\langle \nabla\rangle^{\gamma} f\big\|_{L^2}\big\|\langle \nabla\rangle^{\gamma_1}\nabla g\big\|_{L^2}\Big\},
\end{align*}
by $L^\infty L^2$-H\"older's or $L^2L^\infty$-H\"older's inequality.
These give the desired result in Case 1.

In $\Omega_2$: $|\xi_2|> \frac1{10}|\xi|$. Then
$
\big|\langle \xi\rangle^\gamma-\langle \xi_2\rangle^\gamma\big||\xi_2|\lesssim\langle\xi_1\rangle |\xi_2|^\gamma.
$
Hence, similar as above, we obtain
\begin{align*}
\Big\|\int_{\Omega_2}\big(\langle \xi\rangle^\gamma-\langle \xi_2\rangle^\gamma\big)\xi_2 \widehat f(\xi_1)\widehat g(\xi_2)\,(d\xi_1)\Big\|_{L^2}
&\lesssim \big\|\langle \nabla\rangle^{1+\gamma_1} f\big\|_{L^2}\big\| |\nabla|^sg\big\|_{L^2}.
\end{align*}
This gives the desired result in $\Omega_2$, and the lemma is proved.
\end{proof}

\begin{remark}
In the following of the section, we shall just adopt a weaker version of the estimates from Lemma \ref{lem:kato-Ponce-2}: For any $\gamma\ge 0$ and any $\widetilde \gamma>\max\{\gamma+\frac12,\frac32\}$,
\begin{align}
\big\|[J^\gamma, f]\partial_x g\big\|_{L^2}\lesssim \|f\|_{H^{\widetilde{\gamma}}}\|g\|_{H^\gamma},\label{20.45}
\end{align}
and for any $\gamma>\frac32$,
\begin{align}
\big\|[J^\gamma, f]\partial_x g\big\|_{L^2}\lesssim \|f\|_{H^\gamma}\|g\|_{H^\gamma}.\label{20.46}
\end{align}
\end{remark}

Based on the above inequalities,
we can deduce some estimates as follows, which will be used to obtain the \emph{a prior} estimate of the numerical solution.
\begin{lemma}\label{lm3.5}
The following inequalities hold:
\begin{itemize}
  \item[(i)]
  For any $\gamma\ge 0, \gamma_1>\frac12$, $f\in H^{\gamma},g\in H^{\gamma+\gamma_1+1}$, then
\begin{align*}
\big\langle J^\gamma\partial_x(fg),J^\gamma f\big\rangle\lesssim \|f\|_{H^\gamma}^2\|g\|_{H^{\gamma+\gamma_1+1}}.
\end{align*}
  \item[(ii)]
  For any $\gamma>\frac32$, $f\in H^{\gamma}$, then
\begin{align*}
\big\langle J^\gamma\partial_x\big(f^2\big),J^\gamma f\big\rangle\lesssim \|f\|_{H^\gamma}^3.
\end{align*}
\end{itemize}
\end{lemma}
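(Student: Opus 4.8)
The plan is to prove both inequalities by a single energy-type argument: commute $J^\gamma$ through the product, isolate the one term in which all derivatives fall on $f$, dispose of it by integration by parts on $\bT$, and control the remainder by the commutator bounds \eqref{20.45}--\eqref{20.46} together with the Kato--Ponce inequality of Lemma \ref{lem:kato-Ponce}.

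For part (i), I would first use that $J^\gamma$ and $\partial_x$ commute and split $\partial_x(fg)=g\,\partial_x f+f\,\partial_x g$, so that
$$\big\langle J^\gamma\partial_x(fg), J^\gamma f\big\rangle = \big\langle J^\gamma(g\,\partial_x f), J^\gamma f\big\rangle + \big\langle J^\gamma(f\,\partial_x g), J^\gamma f\big\rangle.$$
In the first bracket I insert the commutator, $J^\gamma(g\,\partial_x f)=g\,\partial_x J^\gamma f+[J^\gamma,g]\partial_x f$. The commutator piece is exactly the shape covered by \eqref{20.45}; choosing $\widetilde\gamma$ with $\max\{\gamma+\tfrac12,\tfrac32\}<\widetilde\gamma\le\gamma+\gamma_1+1$ (possible since $\gamma_1>\tfrac12$) yields $\|[J^\gamma,g]\partial_x f\|_{L^2}\lesssim\|g\|_{H^{\gamma+\gamma_1+1}}\|f\|_{H^\gamma}$, and Cauchy--Schwarz against $J^\gamma f$ gives the claimed bound. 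The genuinely top-order piece $\langle g\,\partial_x J^\gamma f, J^\gamma f\rangle$ is treated by writing $(\partial_x J^\gamma f)(J^\gamma f)=\tfrac12\partial_x\big((J^\gamma f)^2\big)$ and integrating by parts, producing $-\tfrac12\int_\bT(\partial_x g)(J^\gamma f)^2\lesssim\|\partial_x g\|_{L^\infty}\|f\|_{H^\gamma}^2\lesssim\|g\|_{H^{\gamma+\gamma_1+1}}\|f\|_{H^\gamma}^2$ by the Sobolev embedding $H^{\gamma_1}\hookrightarrow L^\infty$. For the second bracket $\langle J^\gamma(f\,\partial_x g), J^\gamma f\rangle$ no cancellation is needed: Lemma \ref{lem:kato-Ponce}(ii) applied with the high-regularity factor $\partial_x g\in H^{\gamma+\gamma_1}$ gives $\|J^\gamma(f\,\partial_x g)\|_{L^2}\lesssim\|g\|_{H^{\gamma+\gamma_1+1}}\|f\|_{H^\gamma}$, and Cauchy--Schwarz finishes it.

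For part (ii), the same scheme applies with $g=f$ and is in fact cleaner because $\gamma>\tfrac32$ makes $H^\gamma$ an algebra. Writing $\partial_x(f^2)=2f\,\partial_x f$ and $J^\gamma(2f\,\partial_x f)=2f\,\partial_x J^\gamma f+2[J^\gamma,f]\partial_x f$, the top-order term integrates by parts to $-\int_\bT(\partial_x f)(J^\gamma f)^2\lesssim\|\partial_x f\|_{L^\infty}\|f\|_{H^\gamma}^2\lesssim\|f\|_{H^\gamma}^3$ (using $\gamma-1>\tfrac12$), while the commutator term is controlled directly by \eqref{20.46}, giving $\|[J^\gamma,f]\partial_x f\|_{L^2}\lesssim\|f\|_{H^\gamma}^2$ and hence the cubic bound after Cauchy--Schwarz.

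The main obstacle, and the only place where a naive product estimate fails, is the top-order term $\langle g\,\partial_x J^\gamma f, J^\gamma f\rangle$: it formally requires $\gamma+1$ derivatives of $f$ while only $H^\gamma$ regularity is assumed. The entire point of the argument is that this term is harmless once one exploits its antisymmetric structure, transferring the extra derivative onto the smooth factor $g$ (respectively $f$) via integration by parts. Everything else reduces to the commutator and Kato--Ponce estimates already established; a routine density argument lets me assume $f,g$ smooth when invoking Lemma \ref{lem:kato-Ponce-2}.
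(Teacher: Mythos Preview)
Your proposal is correct and follows essentially the same route as the paper: the same three-term decomposition $J^\gamma(g\,\partial_x f)=g\,\partial_x J^\gamma f+[J^\gamma,g]\partial_x f$ together with the untouched $J^\gamma(f\,\partial_x g)$ term, the same integration-by-parts trick on the top-order piece, and the same appeals to \eqref{20.45}--\eqref{20.46} and Lemma~\ref{lem:kato-Ponce}(ii) for the commutator and lower-order pieces. Part~(ii) likewise mirrors the paper exactly.
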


\begin{proof}
(i) Directly, we have
\begin{align*}
\big\langle J^\gamma\partial_x(fg),J^\gamma f\big\rangle=\big\langle J^\gamma \partial_x f\cdot g,J^\gamma f\big\rangle
+\big\langle J^\gamma\big(f\cdot \partial_xg\big),J^\gamma f\big\rangle
+\big\langle\big[J^\gamma,g\big]\>\partial_x f,J^\gamma f\big\rangle.
\end{align*}
For the first term on the right-hand side, by using integration-by-parts, it is equal to
\begin{align*}
-\frac12\int \partial_xg\big|J^\gamma f\big|^2\,dx.
\end{align*}
Therefore, we have the estimate
\begin{align*}
\big|\big\langle J^\gamma \partial_x f\cdot g,J^\gamma f\big\rangle \big|
\lesssim  \big\|g\big\|_{H^{\gamma_1+1}}\big\|f\big\|_{H^\gamma}^2,
\end{align*}
for any $\gamma_1>\frac12$. For the second term, by Lemma \ref{lem:kato-Ponce} (ii), we have
\begin{align*}
\big|\big\langle J^\gamma\big(f\cdot \partial_xg\big),J^\gamma f\big\rangle\big|
\lesssim &  \big\|f\cdot \partial_xg\big\|_{H^\gamma} \big\| f\big\|_{H^\gamma}\\
\lesssim & \big\|g\big\|_{H^{\gamma+\gamma_1+1}}\|f\|_{H^\gamma}^2 .
\end{align*}
For the third term, by \eqref{20.45} we have
\begin{align*}
\big|\big\langle\big[J^\gamma,g\big]\>\partial_x f,J^\gamma f\big\rangle\big|
\lesssim &  \big\|\big[J^\gamma,g\big]\>\partial_x  f\big\|_{L^2} \big\| f\big\|_{H^\gamma}
\lesssim   \big\|f\big\|_{H^\gamma}^2\big\|g\big\|_{H^{\gamma+\gamma_1+1}}.
\end{align*}
Combining the three estimates above, we get the estimate in (i).

(ii) We use the similar argument to write
$$
\big\langle J^\gamma\partial_x\big(f^2\big),J^\gamma f\big\rangle
=2\big\langle J^\gamma \partial_x f\cdot f,J^\gamma f\big\rangle
+2\big\langle\big[J^\gamma,f\big]\>\partial_x f,J^\gamma f\big\rangle,
$$
and then for the first term on the right-hand side, we get for any $\gamma_1>\frac{1}{2}$,
\begin{align*}
\big|\big\langle J^\gamma \partial_x f\cdot f,J^\gamma f\big\rangle \big|
\lesssim  \|f\|_{H^{\gamma_1+1}}\|f\|_{H^\gamma}^2.
\end{align*}
By choosing $\gamma_1$ properly, we have $\|f\|_{H^{\gamma_1+1}}\lesssim\|f\|_{H^\gamma}$.
For the second term, by applying \eqref{20.46} instead, we get
\begin{align*}
\big|\big\langle\big[J^\gamma,f\big]\>\partial_x f,J^\gamma f\big\rangle \big|
\lesssim \big\|f\big\|_{H^\gamma}^3,
\end{align*}
and hence, we obtain the estimate in (ii).
\end{proof}

\subsection{Problem reduction}\label{sub1p5}
Now, we start to illustrate the proof of the convergence theorem.
For the simplicity of notations, we shall omit the space variable $x$ in the functions, and we define
\begin{align}
v^n:=\fe^{t_n\partial_x^3}u^n\quad \mbox{ and } \quad {v}_t^{n}:=\frac12\fe^{t_n\partial_x^3}
\partial_x\left(\fe^{-t_n\partial_x^3}v^n\right)^2,\label{vn-t}
\end{align}
where $u^n$ is the numerical solution from the LRI scheme (\ref{LRI}).
Noting that the scheme \eqref{LRI} is obtained by exactly integrating (\ref{v app}), so we have
\begin{align}
v^{n+1}
 =&v^n+
 \frac12\int_0^\tau\fe^{(t_n+s)\partial_x^3}
 \partial_x\left(\fe^{-(t_n+s)\partial_x^3}v^n\right)^2ds\notag\\
 &+\int_0^\tau s\fe^{(t_n+s)\partial_x^3}
 \partial_x\left[
 \left(\fe^{-(t_n+s)\partial_x^3}v^n\right)
 \left(\fe^{-(t_n+s)\partial_x^3}{v}_t^{n}\right)\right]ds.\label{LRI-discrete}
\end{align}
Since the operator $\fe^{t\partial_x^3}$ in the change of variable (\ref{twist}) is unitary, so to prove (\ref{main result}), it is sufficient to prove
\begin{equation*}
  \|v(t_n)-v^n\|_{H^\gamma}\leq C\tau^2,\quad n=0,1,\ldots,\frac{T}{\tau}.
\end{equation*}
To do this, we subtract (\ref{LRI-discrete}) from the exact Duhamel's formula (\ref{v eq}) to get:
\begin{align}\label{11.22}
v(t_{n+1})-v^{n+1}
=&\mathcal{L}^n+\Phi^n\left(v(t_n)\right)-\Phi^n\left(v^n\right),
\end{align}
where we define the local error term as
\begin{align}
\mathcal{L}^n:=&\frac{1}{2}\int_0^\tau \fe^{(t_n+s)\partial_x^3}\partial_x\left(\fe^{-(t_n+s)\partial_x^3}v(t_n+s)\right)^2\,ds
 -\frac12\int_0^\tau\fe^{(t_n+s)\partial_x^3}
 \partial_x\left(\fe^{-(t_n+s)\partial_x^3}v(t_n)\right)^2ds\notag\\
 & -\int_0^\tau s\fe^{(t_n+s)\partial_x^3}
 \partial_x\left[
 \left(\fe^{-(t_n+s)\partial_x^3}v(t_n)\right)
 \left(\fe^{-(t_n+s)\partial_x^3}\partial_tv(t_n)\right)\right]ds,
\label{11.26}
\end{align}
and the numerical propagator as
\begin{align}
\Phi^n(v):=&v+\frac12\int_0^\tau\fe^{(t_n+s)\partial_x^3}
 \partial_x\left(\fe^{-(t_n+s)\partial_x^3}v\right)^2ds\notag\\
 & +\frac12\int_0^\tau s\fe^{(t_n+s)\partial_x^3}
 \partial_x\left[
 \left(\fe^{-(t_n+s)\partial_x^3}v\right)
\left(\fe^{-s\partial_x^3}
\partial_x\left(\fe^{-t_n\partial_x^3}v\right)^2\right)\right]ds
\label{phin def}.
\end{align}
Hence, to obtain a Gronwall type inequality, it reduces to control $\mathcal{L}^n$ and $\Phi\big(v(t_n)\big)-\Phi\big(v^n\big)$, which are regarded as the local error estimate and the stability in the following.


By directly  calculations, we have the following key facts.
\begin{lemma}\label{lem:two-form}
The following equalities hold:

(i) Let $v$ be the solution of \eqref{vt eq}, then
  $$
\partial_t^2v(t)=\frac32\fe^{t\partial_x^3}\partial_{x}^2
\left(\fe^{-t\partial_x^3}\partial_xv(t)\right)^2
+\frac13\fe^{t\partial_x^3}\partial_{x}^2
\left(\fe^{-t\partial_x^3}v(t)\right)^3,\quad t\geq0.
$$

 (ii) Let $f,g\in L^2$ with $\widehat{f}(0)=\widehat{g}(0)=0$, then for any $t_n\geq0$,
  \begin{align*}
   &\int_0^\tau \fe^{(t_n+s)\partial_x^3}\partial_x\left(\fe^{-(t_n+s)\partial_x^3}f\cdot \fe^{-(t_n+s)\partial_x^3}g\right)\,ds\\
  =&
  \frac13 \fe^{t_{n+1}\partial_x^3}\left(\fe^{-t_{n+1}\partial_x^3}\partial_x^{-1}f\cdot \fe^{-t_{n+1}\partial_x^3}\partial_x^{-1}g\right)
  -\frac13\fe^{t_{n}\partial_x^3}\left(\fe^{-t_{n}\partial_x^3}\partial_x^{-1}f\cdot \fe^{-t_{n}\partial_x^3}\partial_x^{-1}g\right);
  \end{align*}
 Moreover for $k=1,2,\ldots,$
  \begin{align*}
  &\int_0^\tau s^k\fe^{(t_n+s)\partial_x^3}\partial_x\left(\fe^{-(t_n+s)\partial_x^3}f\cdot \fe^{-(t_n+s)\partial_x^3}g\right)\,ds\\
  =&\frac{\tau^k}3 \fe^{t_{n+1}\partial_x^3}\left(\fe^{-t_{n+1}\partial_x^3}\partial_x^{-1}f\cdot \fe^{-t_{n+1}\partial_x^3}\partial_x^{-1}g\right)\\
  &-\frac{k}3\int_0^\tau\! s^{k-1}\fe^{(t_n+s)\partial_x^3}\left(\fe^{-(t_n+s)\partial_x^3}\partial_x^{-1}f\cdot \fe^{-(t_n+s)\partial_x^3}\partial_x^{-1}g\right)\,ds.
  \end{align*}
\end{lemma}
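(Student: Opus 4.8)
For part (i), I would work in physical space with the untwisted solution $w:=\fe^{-t\partial_x^3}v$, for which $\partial_t w=-\partial_x^3 w+\tfrac12\partial_x(w^2)$ is just \eqref{vt eq} read back in the original variable. Differentiating \eqref{vt eq} once more in $t$ and using that $\partial_x^3$ commutes with $\fe^{\pm t\partial_x^3}$ and with $\partial_x$, the product rule gives
\[
\partial_t^2 v=\tfrac12\fe^{t\partial_x^3}\partial_x^4(w^2)+\fe^{t\partial_x^3}\partial_x\big(w\,\partial_t w\big).
\]
I would then substitute the formula for $\partial_t w$ and use $w^2\partial_x w=\tfrac13\partial_x(w^3)$, which produces the cubic term $\tfrac13\fe^{t\partial_x^3}\partial_x^2(w^3)$ verbatim and leaves the quadratic contribution $\fe^{t\partial_x^3}\big[\tfrac12\partial_x^4(w^2)-\partial_x(w\partial_x^3 w)\big]$. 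The entire content of (i) then reduces to the Leibniz identity $\tfrac12\partial_x^4(w^2)-\partial_x(w\partial_x^3 w)=\tfrac32\partial_x^2\big((\partial_x w)^2\big)$, which I would verify by two applications of the product rule: $\tfrac12\partial_x^4(w^2)=\partial_x^2\big((\partial_x w)^2\big)+\partial_x^2(w\partial_x^2 w)$ and $\partial_x(w\partial_x^3 w)=\partial_x^2(w\partial_x^2 w)-\tfrac12\partial_x^2\big((\partial_x w)^2\big)$, whose difference collapses to $\tfrac32\partial_x^2\big((\partial_x w)^2\big)$.

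For part (ii), I would pass to Fourier coefficients on $\bT$ and use the resonance identity $(k_1+k_2)^3-k_1^3-k_2^3=3(k_1+k_2)k_1k_2$ already recorded in Section \ref{sec:scheme}. Writing $F(s)$ for the integrand on the left-hand side and $H(s):=\fe^{(t_n+s)\partial_x^3}\big(\fe^{-(t_n+s)\partial_x^3}\partial_x^{-1}f\cdot\fe^{-(t_n+s)\partial_x^3}\partial_x^{-1}g\big)$ for the bracketed integrated quantity, a direct computation of the $k$-th coefficient shows that in both $\widehat{F(s)}(k)$ and $\widehat{H(s)}(k)$ every oscillatory factor collapses to the single exponential $\fe^{-3i(t_n+s)kk_1k_2}$, and comparing the two yields $\tfrac{d}{ds}H(s)=3F(s)$. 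The first identity is then the fundamental theorem of calculus, $\int_0^\tau F\,ds=\tfrac13\big[H(\tau)-H(0)\big]$, and for $k\geq1$ the weighted identity follows from integration by parts,
\[
\int_0^\tau s^k F(s)\,ds=\tfrac13\int_0^\tau s^k H'(s)\,ds=\tfrac{\tau^k}{3}H(\tau)-\tfrac{k}{3}\int_0^\tau s^{k-1}H(s)\,ds,
\]
the boundary term at $s=0$ vanishing precisely because $k\geq1$.

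I expect no genuine analytic obstacle here, since once the objects are set up the work is elementary; the points that require care are purely bookkeeping. In part (ii), the hypothesis $\widehat f(0)=\widehat g(0)=0$ is what I would lean on twice: it makes $\partial_x^{-1}f$ and $\partial_x^{-1}g$ well defined, and it forces $k_1,k_2\neq0$ in every surviving term, while the factor $ik$ from $\partial_x$ annihilates the $k=0$ mode on both sides; together these guarantee that the relevant frequency $-3kk_1k_2$ never vanishes, so the exact integration and the relation $\tfrac{d}{ds}H=3F$ involve no division by zero. In part (i), the only delicate point is tracking the numerical constants when combining $\tfrac12\partial_x^4(w^2)$ with $-\partial_x(w\partial_x^3 w)$ so as to land exactly on the coefficient $\tfrac32$; everything else in that step is forced by the product rule.
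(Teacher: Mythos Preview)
Your proof is correct. For part (ii) your argument is essentially the paper's: both pass to Fourier coefficients and use the resonance identity $(k_1+k_2)^3-k_1^3-k_2^3=3kk_1k_2$; the paper computes the time integrals $\int_0^\tau s^k\fe^{-i(t_n+s)(\xi^3-\xi_1^3-\xi_2^3)}ds$ explicitly mode by mode, whereas you package the same computation as the single relation $H'(s)=3F(s)$ and then apply the fundamental theorem of calculus and integration by parts --- a cleaner formulation of the identical idea.

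For part (i) there is a genuine stylistic difference. The paper stays on the Fourier side throughout: it differentiates \eqref{pt-v-F} in $t$, obtains the quadratic term directly from the factor $-i(\xi^3-\xi_1^3-\xi_2^3)=-3i\xi\xi_1\xi_2$ brought down by the time derivative, and handles the cubic term by a symmetry argument (replacing $\xi_1+\xi_2$ by $\tfrac13(\xi_1+\xi_2+\xi_3)=\tfrac13\xi$ after symmetrizing over the three frequencies). Your physical-space route via $w=\fe^{-t\partial_x^3}v=u$ is more elementary in that it avoids the symmetrization step entirely: the cubic term drops out immediately from $w^2\partial_x w=\tfrac13\partial_x(w^3)$, and the quadratic coefficient $\tfrac32$ emerges from a two-line Leibniz computation. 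The trade-off is that the paper's Fourier computation exhibits more transparently \emph{why} the resonance structure $3\xi\xi_1\xi_2$ is responsible for the extra $\partial_x^2$ appearing in both terms, which is the structural fact driving the later local-error analysis.
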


\begin{proof}
(i) Noting that
\begin{align}
\partial_t\widehat{v}(t,\xi)=\frac12i\xi\int_{\xi=\xi_1+\xi_2}
\fe^{-it\left(\xi^3-\xi_1^3-\xi_2^3\right)}\widehat v(\xi_1) \widehat v(\xi_2)\,(d\xi_1),\quad t\geq0,\label{pt-v-F}
\end{align}
and
\begin{align*}
\xi^3-\xi_1^3-\xi_2^3=3\xi\xi_1\xi_2, 
\end{align*}
we have that for any $t\ge 0$,
\begin{align*}
\partial_t^2\widehat{v}(t,\xi)=&\frac32\xi^2\int_{\xi=\xi_1+\xi_2}
\fe^{-it\left(\xi^3-\xi_1^3-\xi_2^3\right)}\xi_1\xi_2\widehat v(\xi_1) \widehat v(\xi_2)\,(d\xi_1)\\
&+\frac12i\xi\int_{\xi=\xi_1+\xi_2}
\fe^{-it\left(\xi^3-\xi_1^3-\xi_2^3\right)}\partial_t\big(\widehat v(\xi_1) \widehat v(\xi_2)\big)\,(d\xi_1).
\end{align*}
From \eqref{pt-v-F} and symmetry, we get
\begin{align*}
\partial_t^2\widehat{v}(t,\xi)=&\frac32\xi^2\int_{\xi=\xi_1+\xi_2}
\fe^{-it\left(\xi^3-\xi_1^3-\xi_2^3\right)}\xi_1\xi_2\widehat v(\xi_1) \widehat v(\xi_2)\,(d\xi_1)\\
&+i\xi\int_{\xi=\xi_1+\xi_2+\xi_3}
\frac12i(\xi_1+\xi_2)\>\fe^{-it\left(\xi^3-\xi_1^3-\xi_2^3-\xi_3^3\right)}\widehat v(\xi_1) \widehat v(\xi_2)\widehat v(\xi_3)\,(d\xi_1)(d\xi_2).
\end{align*}
By symmetry again, the second term is equal to
\begin{align*}
\quad&-\frac13\xi\int_{\xi=\xi_1+\xi_2+\xi_3}
(\xi_1+\xi_2+\xi_3)\>\fe^{-it\left(\xi^3-\xi_1^3-\xi_2^3-\xi_3^3\right)}\widehat v(\xi_1) \widehat v(\xi_2)\widehat v(\xi_3)\,(d\xi_1)(d\xi_2)\\
=&-\frac13\xi^2\int_{\xi=\xi_1+\xi_2+\xi_3}
\fe^{-it\left(\xi^3-\xi_1^3-\xi_2^3-\xi_3^3\right)}\widehat v(\xi_1) \widehat v(\xi_2)\widehat v(\xi_3)\,(d\xi_1)(d\xi_2).
\end{align*}
Hence, we obtain that
\begin{align*}
\partial_t^2\widehat{v}(t,\xi)=&\frac32\xi^2\int_{\xi=\xi_1+\xi_2}
\fe^{-it\left(\xi^3-\xi_1^3-\xi_2^3\right)}\xi_1\xi_2\widehat v(\xi_1) \widehat v(\xi_2)\,(d\xi_1)\\
&-\frac13\xi^2\int_{\xi=\xi_1+\xi_2+\xi_3}
\fe^{-it\left(\xi^3-\xi_1^3-\xi_2^3-\xi_3^3\right)}\widehat v(\xi_1) \widehat v(\xi_2)\widehat v(\xi_3)\,(d\xi_1)(d\xi_2).
\end{align*}
This proves the equality in (i) by the inverse Fourier transform.

(ii) For $k\geq0$, by taking the Fourier transform we get for any $t_n\geq0$,
  \begin{align*}
  \quad&\mathcal F\left(\int_0^\tau s^k\fe^{(t_n+s)\partial_x^3}\partial_x\big(\fe^{-(t_n+s)\partial_x^3}f\cdot \fe^{-(t_n+s)\partial_x^3}g\big)\,ds\right)(\xi)\\
  =&i\xi\int_0^\tau\!\int_{\xi=\xi_1+\xi_2}\! s^k\fe^{-i(t_n+s)\left(\xi^3-\xi_1^3-\xi_2^3\right)}\widehat{f}(\xi_1)\widehat{g}(\xi_2)\,(d\xi_1)ds.
  \end{align*}
  Note that for $k=0$,
   $$
  \int_0^\tau\!\fe^{-i(t_n+s)\left(\xi^3-\xi_1^3-\xi_2^3\right)}ds
  =-\frac{1}{3i\xi\xi_1\xi_2}\fe^{-it_{n+1}\left(\xi^3-\xi_1^3-\xi_2^3\right)}
  +\frac{1}{3i\xi\xi_1\xi_2}\fe^{-it_{n}\left(\xi^3-\xi_1^3-\xi_2^3\right)},
  $$
  and for $k\geq1$,
 \begin{align*}
  &\int_0^\tau\!s^k\fe^{-i(t_n+s)\left(\xi^3-\xi_1^3-\xi_2^3\right)}ds\\
  =&
  -\frac{\tau^k}{3i\xi\xi_1\xi_2}\fe^{-it_{n+1}\left(\xi^3-\xi_1^3-\xi_2^3\right)}
  +\frac{k}{3i\xi\xi_1\xi_2}\int_0^\tau\! s^{k-1}\fe^{-i(t_n+s)
  \left(\xi^3-\xi_1^3-\xi_2^3\right)}ds.
  \end{align*}
Then by the above formulas, we find
  \begin{align*}
  \quad&\mathcal F\left(\int_0^\tau \fe^{(t_n+s)\partial_x^3}\partial_x\left[\fe^{-(t_n+s)\partial_x^3}f\cdot \fe^{-(t_n+s)\partial_x^3}g\right]\,ds\right)(\xi)\\
  =&-\int_{\xi=\xi_1+\xi_2}\! \frac{1}{3\xi_1\xi_2}\fe^{-it_{n+1}\left(\xi^3-\xi_1^3-\xi_2^3\right)}
  \widehat{f}(\xi_1)\widehat{g}(\xi_2)\,(d\xi_1)\\
 & +\int_{\xi=\xi_1+\xi_2}\! \frac{1}{3\xi_1\xi_2}\fe^{-it_{n}\left(\xi^3-\xi_1^3-\xi_2^3\right)}
 \widehat{f}(\xi_1)\widehat{g}(\xi_2)\,(d\xi_1),
  \end{align*}
  and for $k\geq1$,
  \begin{align*}
  \quad&\mathcal F\left(\int_0^\tau s^k\fe^{(t_n+s)\partial_x^3}\partial_x\left[\fe^{-(t_n+s)\partial_x^3}f\cdot \fe^{-(t_n+s)\partial_x^3}g\right]\,ds\right)(\xi)\\
  =&-\tau^k\int_{\xi=\xi_1+\xi_2}\! \frac{1}{3\xi_1\xi_2}\fe^{-it_{n+1}\left(\xi^3-\xi_1^3-\xi_2^3\right)}
  \widehat{f}(\xi_1)\widehat{g}(\xi_2)\,(d\xi_1)\\
 & +k\int_0^\tau\!\int_{\xi=\xi_1+\xi_2}\! \frac{s^{k-1}}{3\xi_1\xi_2}\fe^{-i(t_n+s)\left(\xi^3-\xi_1^3-\xi_2^3\right)}
 \widehat{f}(\xi_1)\widehat{g}(\xi_2)\,(d\xi_1)ds,
  \end{align*}
 which give the two equalities in (ii) by the inverse Fourier transform.
\end{proof}

Some consequences of the above formulas together with the Kato-Ponce inequality are the following two lemmas, which will be used for the proof of the boundedness of the  numerical solution.
\begin{lemma}\label{lem:bi-est}  Let $f\in H^\gamma,g\in H^{\gamma+\gamma_1}$ with $\widehat{f}(0)=\widehat{g}(0)=0$ for $\gamma\ge 0, \gamma_1>\frac12$, then the following inequality holds for any $t_n\geq0$:
\begin{align*}
\Big\|\int_0^\tau \fe^{(t_n+s)\partial_x^3}&\partial_x\big(\fe^{-(t_n+s)\partial_x^3}f\cdot \fe^{-(t_n+s)\partial_x^3}g\big)\,ds\Big\|_{H^\gamma}
\lesssim \sqrt\tau\|f\|_{H^\gamma}\|g\|_{H^{\gamma+\gamma_1}}.
\end{align*}
Moreover, if $\gamma>\frac12$, then
\begin{align}
\Big\|\int_0^\tau \fe^{(t_n+s)\partial_x^3}&\partial_x\big(\fe^{-(t_n+s)\partial_x^3}f\cdot \fe^{-(t_n+s)\partial_x^3}g\big)\,ds\Big\|_{H^\gamma}
\lesssim \sqrt\tau\|f\|_{H^\gamma}\|g\|_{H^{\gamma}}.\label{bi-est-2}
\end{align}
\end{lemma}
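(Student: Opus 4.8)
The plan is to pass to the Fourier side, extract a factor $\sqrt\tau$ from the oscillatory $s$-integral by exploiting the cubic resonance relation, and then dispose of the resulting low-frequency singularities by the Kato-Ponce inequality. First I would take the Fourier transform of the integral exactly as in the proof of Lemma \ref{lem:two-form}(ii): writing $\mu:=\xi^3-\xi_1^3-\xi_2^3=3\xi\xi_1\xi_2$,
\[
\mathcal F\Big(\int_0^\tau \fe^{(t_n+s)\partial_x^3}\partial_x\big(\fe^{-(t_n+s)\partial_x^3}f\cdot \fe^{-(t_n+s)\partial_x^3}g\big)\,ds\Big)(\xi)
= i\xi\int_{\xi=\xi_1+\xi_2}\Big(\int_0^\tau \fe^{-i(t_n+s)\mu}\,ds\Big)\widehat f(\xi_1)\widehat g(\xi_2)\,(d\xi_1).
\]
Only frequencies with $\xi,\xi_1,\xi_2$ all nonzero contribute: the factor $i\xi$ kills $\xi=0$, while $\widehat f(0)=\widehat g(0)=0$ kill $\xi_1=0$ or $\xi_2=0$. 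Hence $\mu\ne0$ throughout, and I would estimate the inner integral by $\big|\int_0^\tau \fe^{-i(t_n+s)\mu}\,ds\big|\le\min\{\tau,2/|\mu|\}\le\sqrt{2\tau/|\mu|}$.

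The key point is that $1/|\mu|^{1/2}=(3|\xi||\xi_1||\xi_2|)^{-1/2}$ turns the loss $|\xi|$ in front into the symbol $|\xi|^{1/2}|\xi_1|^{-1/2}|\xi_2|^{-1/2}$. Writing $\widetilde f,\widetilde g$ for the functions whose Fourier coefficients are $|\widehat f|,|\widehat g|$ (so $\|\widetilde f\|_{H^s}=\|f\|_{H^s}$, $\|\widetilde g\|_{H^s}=\|g\|_{H^s}$, and both still have zero mean), the two displays above give for every $\xi$
\[
\Big|\mathcal F\big(\cdots\big)(\xi)\Big|\lesssim \sqrt\tau\int_{\xi=\xi_1+\xi_2}\frac{|\xi|^{1/2}}{|\xi_1|^{1/2}|\xi_2|^{1/2}}\,\widehat{\widetilde f}(\xi_1)\,\widehat{\widetilde g}(\xi_2)\,(d\xi_1).
\]
Then I would use $|\xi|^{1/2}\le|\xi_1|^{1/2}+|\xi_2|^{1/2}$ to split the symbol as $|\xi_2|^{-1/2}+|\xi_1|^{-1/2}$, which identifies the right-hand side, after multiplying by $\langle\xi\rangle^\gamma$ and taking the $L^2_\xi$-norm, with $\sqrt\tau$ times
\[
\big\|J^\gamma\big(\widetilde f\cdot|\partial_x|^{-1/2}\widetilde g\big)\big\|_{L^2}+\big\|J^\gamma\big(|\partial_x|^{-1/2}\widetilde f\cdot\widetilde g\big)\big\|_{L^2},
\]
where $|\partial_x|^{-1/2}$ is the Fourier multiplier $|\xi|^{-1/2}$ on nonzero modes.

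The heart of the matter, and the step where the hypotheses $\gamma_1>\frac12$ and the mean-zero condition are used, is the treatment of these two products, since for $\gamma\le\frac12$ the naive bound would require the divergent sum $\sum_{\xi_1\ne0}1/|\xi_1|$ and $H^\gamma$ is not an algebra. I would resolve this by two observations: because $\widetilde g$ (resp. $\widetilde f$) has zero mean, $|\partial_x|^{-1/2}$ is bounded on it in every Sobolev norm, i.e. $\||\partial_x|^{-1/2}\widetilde g\|_{H^s}\le\|\widetilde g\|_{H^s}$ since $|\xi|\ge1$ on the support; and then Lemma \ref{lem:kato-Ponce}(ii) applies. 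Concretely, placing the factor carrying $\gamma_1$ derivatives in the $H^{\gamma+\gamma_1}$-slot gives $\|J^\gamma(\widetilde f\cdot|\partial_x|^{-1/2}\widetilde g)\|\lesssim \||\partial_x|^{-1/2}\widetilde g\|_{H^{\gamma+\gamma_1}}\|\widetilde f\|_{H^\gamma}\lesssim \|g\|_{H^{\gamma+\gamma_1}}\|f\|_{H^\gamma}$, and symmetrically for the second product; summing yields the first estimate. For the case $\gamma>\frac12$ I would instead invoke the algebra form in Lemma \ref{lem:kato-Ponce}(i), $\|J^\gamma(FG)\|\lesssim\|F\|_{H^\gamma}\|G\|_{H^\gamma}$, applied to the same two products, which replaces $\|g\|_{H^{\gamma+\gamma_1}}$ by $\|g\|_{H^\gamma}$ and gives \eqref{bi-est-2}. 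The main obstacle throughout is exactly this interplay: one must extract $\sqrt\tau$ from the oscillation without creating a non-integrable $1/|\xi|$ singularity, and it is the resonance factor $\mu=3\xi\xi_1\xi_2$ together with the mean-zero assumption that makes the two demands compatible.
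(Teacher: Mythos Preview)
Your argument is correct, and it takes a genuinely different route from the paper. The paper does not interpolate the oscillatory integral; instead it invokes the exact integration formula of Lemma~\ref{lem:two-form}(ii) to write the quantity as a difference of boundary terms involving $\partial_x^{-1}f$ and $\partial_x^{-1}g$, and then computes the \emph{squared} $H^\gamma$-norm as the inner product of the original $s$-integral against these boundary terms. Cauchy--Schwarz on that inner product over $s\in[0,\tau]$ produces the factor $\tau$ at the level of the square (hence $\sqrt\tau$ for the norm), and Kato--Ponce handles the resulting products $fg$ and $\partial_x^{-1}f\cdot\partial_x^{-1}g$. Your approach instead extracts $\sqrt\tau$ pointwise in Fourier via $\min\{\tau,2/|\mu|\}\le\sqrt{2\tau/|\mu|}$ and then distributes the half-derivative gain $|\mu|^{-1/2}=(3|\xi\xi_1\xi_2|)^{-1/2}$ using $|\xi|^{1/2}\le|\xi_1|^{1/2}+|\xi_2|^{1/2}$. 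This is more direct, avoids the squared-norm trick, and would readily yield the intermediate scale $\tau^\alpha$ for any $\alpha\in[0,1]$; the paper's version, on the other hand, reuses its own Lemma~\ref{lem:two-form}(ii) and gains a full $\partial_x^{-1}$ on each factor, which is structurally closer to the scheme itself. Both reductions land on the same Kato--Ponce endgame.
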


\begin{proof}
From Lemma \ref{lem:two-form}-(ii) and integration-by-parts, we get for any $t_n\geq0$ and $\gamma\geq0$,
\begin{align}
\quad&\Big\|\int_0^\tau  \fe^{(t_n+s)\partial_x^3}\partial_x\big(\fe^{-(t_n+s)\partial_x^3}f\cdot \fe^{-(t_n+s)\partial_x^3}g\big)\,ds\Big\|_{H^\gamma}^2\label{15.48}\\
=&\frac13\int_0^\tau\!\! \Big\langle J^\gamma\fe^{(t_n+s)\partial_x^3}\big(\fe^{-(t_n+s)\partial_x^3}f\cdot \fe^{-(t_n+s)\partial_x^3}g\big), \fe^{t_n\partial_x^3}\partial_xJ^\gamma\big(\fe^{-t_n\partial_x^3}\partial_x^{-1}f\cdot \fe^{-t_n\partial_x^3}\partial_x^{-1}g\big)\Big\rangle\,ds\nonumber\\
&-\frac13\int_0^\tau\!\! \Big\langle J^\gamma\fe^{(t_n+s)\partial_x^3}\big(\fe^{-(t_n+s)\partial_x^3}f\cdot \fe^{-(t_n+s)\partial_x^3}g\big), \fe^{t_{n+1}\partial_x^3}\partial_xJ^\gamma
\big(\fe^{-t_{n+1}\partial_x^3}\partial_x^{-1}f\cdot \fe^{-t_{n+1}\partial_x^3}\partial_x^{-1}g\big)\Big\rangle\,ds.\notag
\end{align}
 By  Cauchy-Schwarz's inequality,
\begin{align*}
\big|\eqref{15.48}\big|
\lesssim &
\int_0^\tau \Big\| J^\gamma\big(\fe^{-(t_n+s)\partial_x^3}f\cdot \fe^{-(t_n+s)\partial_x^3}g\big)\Big\|_{L^2} \Big\|\partial_xJ^\gamma\big(\fe^{-t_{n}\partial_x^3}\partial_x^{-1}f\cdot \fe^{-t_{n}\partial_x^3}\partial_x^{-1}g\big)\Big\|_{L^2}\,ds\\
&+
\int_0^\tau \Big\| J^\gamma\big(\fe^{-(t_n+s)\partial_x^3}f\cdot \fe^{-(t_n+s)\partial_x^3}g\big)\Big\|_{L^2} \Big\|\partial_xJ^\gamma\big(\fe^{-t_{n+1}\partial_x^3}\partial_x^{-1}f\cdot \fe^{-t_{n+1}\partial_x^3}\partial_x^{-1}g\big)\Big\|_{L^2}\,ds.
\end{align*}
For simplicity, we shall only present the estimate of the second term on the right-hand side of the above inequality, and the first term can be treated in the same way.
By  Lemma \ref{lem:kato-Ponce} (ii), we have for any $\gamma\geq0$,
\begin{align*}
\Big\| J^\gamma\big(\fe^{-(t_n+s)\partial_x^3}f\cdot \fe^{-(t_n+s)\partial_x^3}g\big)\Big\|_{L^2}
\lesssim &\| f\|_{H^{\gamma}}\| g\|_{H^{\gamma+\gamma_1}};
\end{align*}
Or by Lemma \ref{lem:kato-Ponce} (i), when $\gamma>\frac12$,
\begin{align*}
\Big\| J^\gamma\big(\fe^{-(t_n+s)\partial_x^3}f\cdot \fe^{-(t_n+s)\partial_x^3}g\big)\Big\|_{L^2}
\lesssim & \|f\|_{H^{\gamma}}\| g\|_{H^{\gamma}}.
\end{align*}
Similarly, we have for any  $\gamma\geq0,\,\gamma_1>\frac12$,
\begin{align*}
\Big\|\partial_xJ^\gamma\big(\fe^{-t_{n+1}\partial_x^3}\partial_x^{-1}f\cdot \fe^{-t_{n+1}\partial_x^3}\partial_x^{-1}g\big)\Big\|_{L^2}
\lesssim \|f\|_{H^{\gamma}}\| g\|_{H^{\gamma+\gamma_1}},
\end{align*}
and for $\gamma>\frac12$,
\begin{align*}
\Big\|\partial_xJ^\gamma\big(\fe^{-t_{n+1}\partial_x^3}\partial_x^{-1}f\cdot \fe^{-t_{n+1}\partial_x^3}\partial_x^{-1}g\big)\Big\|_{L^2}
\lesssim \|f\|_{H^{\gamma}}\| g\|_{H^{\gamma}}.
\end{align*}
Therefore, in total we find that for $\gamma\geq0,\,\gamma_1>\frac12$,
\begin{align*}
\big|\eqref{15.48}\big|
\lesssim \tau\| f\|_{H^{\gamma}}^2\| g\|_{H^{\gamma+\gamma_1}}^2;
\end{align*}
Or when $\gamma>\frac12$,
\begin{align*}
\big|\eqref{15.48}\big|
\lesssim \tau\| f\|_{H^{\gamma}}^2\| g\|_{H^{\gamma}}^2.
\end{align*}
This finishes the proof of the lemma.
\end{proof}
Moreover, we have
\begin{lemma}\label{lem:tri-est} The following estimates hold:
\begin{itemize}
  \item[(i)]
  Let $f_1\in H^{\gamma}, f_2\in H^{\gamma+\gamma_1}$ and $f_3\in H^{\gamma+\gamma_1}$ for $\gamma\ge0, \gamma_1>\frac12$ with $\widehat{f_j}(0)=0$ for $j=1,2,3$, then for any $t_n\geq0$, $t\in \R$ and $k\geq1$,
$$
\Big\|\int_0^\tau s^k\fe^{(t_n+s)\partial_x^3}\partial_x\Big(\fe^{-(t+s)\partial_x^3}\partial_x\big(f_1f_2\big)\cdot \fe^{-(t_n+s)\partial_x^3}f_3\Big)\,ds\Big\|_{H^{\gamma}}
\lesssim \tau^k\|f_1\|_{H^\gamma}\|f_2\|_{H^{\gamma+\gamma_1}}\|f_3\|_{H^{\gamma+\gamma_1-1}}.
$$
  \item[(ii)]
  Let $f_j\in H^{\gamma_0}$ for $\gamma_0>\frac12$ with $\widehat{f_j}(0)=0$ for $j=1,2,3$, then for any $t_n\geq0$, $t\in \R$ and $k\geq1$,
$$
\Big\|\int_0^\tau s^k\fe^{(t_n+s)\partial_x^3}\partial_x\Big(\fe^{-(t+s)\partial_x^3}
\partial_x\big(f_1f_2\big)\cdot \fe^{-(t_n+s)\partial_x^3}f_3\Big)\,ds\Big\|_{H^{\gamma_0}}
\lesssim \tau^k \|f_1\|_{H^{\gamma_0}}\|f_2\|_{H^{\gamma_0}}\|f_3\|_{H^{\gamma_0-1}}.
$$
\end{itemize}
\end{lemma}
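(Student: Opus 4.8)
The plan is to collapse the trilinear quantity onto the bilinear identity of Lemma~\ref{lem:two-form}(ii). First I address the mismatch between the phase $\fe^{-(t+s)\partial_x^3}$ carried by the first factor and the phase $\fe^{-(t_n+s)\partial_x^3}$ carried by $f_3$. Since $\{\fe^{s\partial_x^3}\}$ is a unitary group, I write $\fe^{-(t+s)\partial_x^3}\partial_x(f_1f_2)=\fe^{-(t_n+s)\partial_x^3}F$ with $F:=\fe^{(t_n-t)\partial_x^3}\partial_x(f_1f_2)$; the outer $\partial_x$ guarantees $\widehat F(0)=0$, while $\widehat{f_3}(0)=0$ by hypothesis, so the pair $(F,f_3)$ satisfies the assumptions of Lemma~\ref{lem:two-form}(ii).

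Applying Lemma~\ref{lem:two-form}(ii) with this $F$, with $g=f_3$, and with $k\ge1$ rewrites the integral as a boundary term carrying the prefactor $\tfrac{\tau^k}{3}$ plus a genuinely lower-order integral carrying the prefactor $\tfrac{k}{3}$ and the weight $s^{k-1}$. The decisive structural gain is that every factor now appears under $\partial_x^{-1}$. On the first factor this cancels the inner derivative exactly: $\partial_x^{-1}F=\fe^{(t_n-t)\partial_x^3}\partial_x^{-1}\partial_x(f_1f_2)$ is $\fe^{(t_n-t)\partial_x^3}$ applied to the zero-mean part of $f_1f_2$, so by unitarity and the fact that discarding the zero mode does not increase an $H^\gamma$ norm, $\|\partial_x^{-1}F\|_{H^\gamma}\lesssim\|f_1f_2\|_{H^\gamma}$. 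On the second factor $\partial_x^{-1}$ simply gains one derivative, $\|\partial_x^{-1}f_3\|_{H^{s}}\lesssim\|f_3\|_{H^{s-1}}$ (valid because $\langle l\rangle^{s}/|l|\lesssim\langle l\rangle^{s-1}$ for $l\ne0$).

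For the boundary term, unitarity of $\fe^{t_{n+1}\partial_x^3}$ lets me drop the outer propagator and estimate the $H^\gamma$ norm of the product $\fe^{-t_{n+1}\partial_x^3}\partial_x^{-1}F\cdot\fe^{-t_{n+1}\partial_x^3}\partial_x^{-1}f_3$ directly. I apply Lemma~\ref{lem:kato-Ponce}(ii) putting the smoother role on $\partial_x^{-1}f_3$, giving a bound $\lesssim\|\partial_x^{-1}f_3\|_{H^{\gamma+\gamma_1}}\,\|\partial_x^{-1}F\|_{H^\gamma}\lesssim\|f_3\|_{H^{\gamma+\gamma_1-1}}\,\|f_1f_2\|_{H^\gamma}$, and then a second use of Lemma~\ref{lem:kato-Ponce}(ii) on $\|f_1f_2\|_{H^\gamma}\lesssim\|f_1\|_{H^\gamma}\|f_2\|_{H^{\gamma+\gamma_1}}$. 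The lower-order integral is treated identically: pull the unitary group out under the $H^\gamma$ norm, apply the same bilinear bound pointwise in $s$, and integrate $\int_0^\tau s^{k-1}\,ds=\tau^k/k$, which absorbs the constant $k/3$. Summing the two contributions yields part (i). Part (ii) is the same computation verbatim, with the two invocations of Lemma~\ref{lem:kato-Ponce}(ii) replaced by the algebra estimate Lemma~\ref{lem:kato-Ponce}(i) for $\gamma_0>\tfrac12$, which gives $\|f_1f_2\|_{H^{\gamma_0}}\lesssim\|f_1\|_{H^{\gamma_0}}\|f_2\|_{H^{\gamma_0}}$ and controls the final product by $\|\partial_x^{-1}F\|_{H^{\gamma_0}}\|\partial_x^{-1}f_3\|_{H^{\gamma_0}}$ with $\|\partial_x^{-1}f_3\|_{H^{\gamma_0}}\lesssim\|f_3\|_{H^{\gamma_0-1}}$.

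The main obstacle is conceptual rather than computational: it is recognizing that Lemma~\ref{lem:two-form}(ii) does double duty here, simultaneously producing the $\tau^k$ smallness and, through the $\partial_x^{-1}$ it places on the first factor, annihilating the inner derivative on $f_1f_2$ so that no net derivative is lost beyond the single one absorbed into $f_3$. Once this is seen, the phase mismatch is dispatched by unitarity and the rest is routine Kato--Ponce bookkeeping.
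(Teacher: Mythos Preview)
Your proof is correct and follows essentially the same route as the paper: reduce to Lemma~\ref{lem:two-form}(ii), so that the outer $\partial_x$ is traded for $\partial_x^{-1}$ on each factor (canceling the inner derivative on $f_1f_2$ and gaining a derivative on $f_3$), then close with Kato--Ponce. Your explicit handling of the phase mismatch via $F:=\fe^{(t_n-t)\partial_x^3}\partial_x(f_1f_2)$ is a clean way to justify the step the paper writes down directly; otherwise the arguments coincide.
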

\begin{proof}
(i) From Lemma \ref{lem:two-form}-(ii), we find for $k\geq1$,
\begin{align*}
\quad &\int_0^\tau s^k\fe^{(t_n+s)\partial_x^3}\partial_x\Big(\fe^{-(t+s)\partial_x^3}
\partial_x\big(f_1f_2\big)\cdot \fe^{-(t_n+s)\partial_x^3}f_3\Big)\,ds\\
=&\frac{\tau^k}{3}\fe^{t_{n+1}\partial_x^3}
\Big(\fe^{-(t+\tau)\partial_x^3}\big(f_1f_2\big)\cdot \fe^{-(t_n+\tau)\partial_x^3}\partial_x^{-1}f_3\Big)\\
& -\frac k3\int_0^\tau s^{k-1}\fe^{(t_{n}+s)\partial_x^3}\Big(\fe^{-(t+s)\partial_x^3}\big(f_1f_2\big)\cdot \fe^{-(t_{n}+s)\partial_x^3}\partial_x^{-1}f_3\Big)\,ds.
\end{align*}
By H\"older's inequality and Sobolev's inequality, we have for any $\gamma\geq0,\gamma_1>\frac12$,
\begin{align*}
\quad &\Big\|\int_0^\tau s^k\fe^{(t_n+s)\partial_x^3}\partial_x
\Big(\fe^{-(t+s)\partial_x^3}\partial_x\big(f_1f_2\big)\cdot \fe^{-(t_n+s)\partial_x^3}f_3\Big)\,ds\Big\|_{H^\gamma}\\
\lesssim & \tau^k\Big(\|f_1\|_{H^\gamma}\|f_2\|_{H^{\gamma_1}}
\|\partial_x^{-1}f_3\|_{H^{\gamma_1}}
+\|f_1\|_{L^2}\|f_2\|_{H^{\gamma+\gamma_1}}\|\partial_x^{-1}f_3\|_{H^{\gamma_1}}\\
&+\|f_1\|_{L^2}\|f_2\|_{H^{\gamma_1}}\|\partial_x^{-1}f_3\|_{H^{\gamma+\gamma_1}}\Big)\\
\lesssim&
\tau^k\|f_1\|_{H^\gamma}\|f_2\|_{H^{\gamma+\gamma_1}}\|f_3\|_{H^{\gamma+\gamma_1-1}}.
\end{align*}

(ii) By similar arguments as above but with the different H\"older's inequality, we have that for any $\gamma_1>\frac12$,
\begin{align*}
\quad &\Big\|\int_0^\tau s^k\fe^{(t_n+s)\partial_x^3}\partial_x\Big(\fe^{-(t+s)\partial_x^3}
\partial_x\big(f_1f_2\big)\cdot e^{-(t_n+s)\partial_x^3}f_3\Big)\,ds\Big\|_{H^{\gamma_0}}\\
\lesssim& \tau^k\Big(\|f_1\|_{H^{\gamma_0}}\|f_2\|_{H^{\gamma_1}}\|\partial_x^{-1}f_3\|_{H^{\gamma_1}}
+\|f_1\|_{H^{\gamma_1}}\|f_2\|_{H^{\gamma_0}}\|\partial_x^{-1}f_3\|_{H^{\gamma_1}}\\
&+\|f_1\|_{H^{\gamma_1}}\|f_2\|_{H^{\gamma_1}}\|\partial_x^{-1}f_3\|_{H^{\gamma_0}}\Big)\\
\lesssim&
\tau^k\|f_1\|_{H^{\gamma_0}}\|f_2\|_{H^{\gamma_0}}\|f_3\|_{H^{\gamma_0-1}},
\end{align*}
where we used the fact $\gamma_0>\frac12$ in the last step.
\end{proof}

\subsection{\emph{A priori} estimate}
With the prepared lemmas before, we can obtain the \emph{a priori} estimate of the numerical solution $v^n$ which will be a key for the stability proof later. It is done here by establishing a weaker convergence rate of the scheme as in \cite{Lubich} together with estimates from the Kato-Ponce inequaltiy.

\begin{lemma}\label{lem:est-vn} (A priori estimate of $v^n$)
  For any $\gamma_0>\frac32$, if $v_0\in H^{\gamma_0+2}$, then there exist constants $\tau_0>0$  and $C>0$, such that for any $0<\tau\leq\tau_0$ we have
$$
\|v^{n}\|_{H^{\gamma_0}}\le C,\quad n=0,1,\ldots,\frac{T}{\tau},
$$
 where $\tau_0$ and $C$ depend only on $T$ and $\|v\|_{L^\infty((0,T);H^{\gamma_0+2})}$.
\end{lemma}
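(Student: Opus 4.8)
The plan is to follow the device of \cite{Lubich}: rather than bounding $v^n$ by a direct energy estimate on the scheme (which, owing to the Burgers-type nonlinearity, only closes on a time interval whose length degenerates as $\|v^n\|$ grows), I would tie $v^n$ to the exact solution $v$, whose $H^{\gamma_0+2}$-norm is bounded on $[0,T]$ by hypothesis, and prove a \emph{weaker} convergence estimate in the higher norm $H^{\gamma_0}$. Setting $e^n:=v(t_n)-v^n$ and using the error identity \eqref{11.22}, I would argue by a bootstrap induction: assuming $\|e^m\|_{H^{\gamma_0}}\le1$ for all $m\le n$, so that $\|v^m\|_{H^{\gamma_0}}\le\|v\|_{L^\infty((0,T);H^{\gamma_0})}+1=:M$, I would derive a recursion
$$\|e^{n+1}\|_{H^{\gamma_0}}^2\le(1+C\tau)\|e^{n}\|_{H^{\gamma_0}}^2+C\tau^{1+2p}$$
for some $p>0$, with $C=C(M,T,\|v\|_{L^\infty H^{\gamma_0+2}})$, and close it by discrete Gronwall over $n\le T/\tau$. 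The resulting bound $\|e^n\|_{H^{\gamma_0}}\lesssim\tau^{p}\to0$ both closes the bootstrap for $\tau_0$ small and forces $\|v^n\|_{H^{\gamma_0}}\le\|v(t_n)\|_{H^{\gamma_0}}+1\le M$, which is the claim. Throughout, $\gamma_0>\tfrac32$ places us in the algebra regime, so products are handled by Lemma \ref{lem:kato-Ponce}, and the zero-mean hypotheses of Lemmas \ref{lem:bi-est}--\ref{lem:tri-est} hold because the scheme preserves the vanishing mean.

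For the local error $\mathcal L^n$ of \eqref{11.26} I would Taylor-expand $v(t_n+s)=v(t_n)+s\,\partial_tv(t_n)+r(s)$ with $r(s)=\int_0^s(s-\theta)\partial_t^2v(t_n+\theta)\,d\theta$ and substitute into the difference of squares. A useful first observation is that the first-order part cancels the third integral of \eqref{11.26} exactly, so $\mathcal L^n$ retains only genuinely second-order pieces: the remainder interaction $\fe^{-(t_n+s)\partial_x^3}v(t_n)\cdot\fe^{-(t_n+s)\partial_x^3}r(s)$, the dropped term $s^2(\fe^{-(t_n+s)\partial_x^3}\partial_tv(t_n))^2$, and smaller mixtures. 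The dropped term fits Lemma \ref{lem:tri-est}(ii) directly once $\partial_tv(t_n)$ is rewritten through \eqref{vt eq}, giving $O(\tau^2)\|v(t_n)\|_{H^{\gamma_0}}^4$. For the remainder interaction I would use Lemma \ref{lem:two-form}(i) to trade the two time derivatives in $\partial_t^2v$ for a $\partial_x^2$ on a quadratic/cubic nonlinearity, and then the $\partial_x^{-1}$-gain encoded in Lemma \ref{lem:two-form}(ii) (equivalently, the $\partial_x(f_1f_2)$-structure of Lemmas \ref{lem:bi-est}, \ref{lem:tri-est}) to absorb the outer $\partial_x$ and recover one spatial derivative. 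This single recovered derivative is exactly what lowers the requirement from the naive $H^{\gamma_0+3}$ to $H^{\gamma_0+2}$; the outcome is $\|\mathcal L^n\|_{H^{\gamma_0}}\lesssim\tau^{1+p}$ with constant depending only on $\|v\|_{L^\infty((0,T);H^{\gamma_0+2})}$.

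The stability estimate is where I expect the real difficulty. Writing $\Phi^n$ as in \eqref{phin def}, the leading part of $\Phi^n(v(t_n))-\Phi^n(v^n)$ is $\tfrac12[Q^n(v(t_n))-Q^n(v^n)]$ with $Q^n(w)=\int_0^\tau\fe^{(t_n+s)\partial_x^3}\partial_x(\fe^{-(t_n+s)\partial_x^3}w)^2\,ds$, and expanding the difference of squares produces the factor $\fe^{-(t_n+s)\partial_x^3}(v(t_n)+v^n)$ against $\fe^{-(t_n+s)\partial_x^3}e^n$. A naive product bound from Lemma \ref{lem:bi-est} yields only $\sqrt\tau$, fatal for a Gronwall over $T/\tau$ steps, so I would instead compute $\|e^{n+1}\|_{H^{\gamma_0}}^2=\langle J^{\gamma_0}e^{n+1},J^{\gamma_0}e^{n+1}\rangle$ and inspect the cross term with $e^n$. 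After moving the unitary $\fe^{(t_n+s)\partial_x^3}$ onto $e^n$, this term takes exactly the form $\int_0^\tau\langle J^{\gamma_0}\partial_x(\tilde g_s\,\tilde e_s),J^{\gamma_0}\tilde e_s\rangle\,ds$ of Lemma \ref{lm3.5}, where $\tilde e_s=\fe^{-(t_n+s)\partial_x^3}e^n$ and $\tilde g_s=\fe^{-(t_n+s)\partial_x^3}(v(t_n)+v^n)$. The obstruction is that Lemma \ref{lm3.5}(i) demands the coefficient $v(t_n)+v^n$ in $H^{\gamma_0+\gamma_1+1}$, a norm unavailable for $v^n$. The crux of the whole lemma is to split $v(t_n)+v^n=2v(t_n)-e^n$: the smooth part $2v(t_n)$ belongs to $H^{\gamma_0+2}=H^{\gamma_0+\gamma_1+1}$ with $\gamma_1=1$ and is handled by Lemma \ref{lm3.5}(i), contributing $\lesssim\|e^n\|_{H^{\gamma_0}}^2\|v(t_n)\|_{H^{\gamma_0+2}}$, while the rough part gives the pure square $-\langle J^{\gamma_0}\partial_x(\tilde e_s^2),J^{\gamma_0}\tilde e_s\rangle$, bounded by Lemma \ref{lm3.5}(ii) (valid since $\gamma_0>\tfrac32$) by $\|e^n\|_{H^{\gamma_0}}^3\le\|e^n\|_{H^{\gamma_0}}^2$ under the bootstrap. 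Thus the cross term is $\lesssim\tau\|e^n\|_{H^{\gamma_0}}^2$, delivering the decisive $(1+C\tau)$.

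It then remains to collect the routine contributions: the cubic part of $\Phi^n$ carries an explicit $s$, so its trilinearised difference is $O(\tau)\|e^n\|_{H^{\gamma_0}}$ by Lemma \ref{lem:tri-est}(ii) with $M$-dependent constants; the squared quadratic and cubic terms in $\|e^{n+1}\|_{H^{\gamma_0}}^2$ are $O(\tau)\|e^n\|_{H^{\gamma_0}}^2$ by \eqref{bi-est-2} and Lemma \ref{lem:tri-est}; and the local-error contribution $2\langle J^{\gamma_0}e^n,J^{\gamma_0}\mathcal L^n\rangle+\|\mathcal L^n\|_{H^{\gamma_0}}^2$ is absorbed by Young's inequality. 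Summing gives the recursion above, and discrete Gronwall closes the argument for $0<\tau\le\tau_0$, with $\tau_0,C$ depending only on $T$ and $\|v\|_{L^\infty((0,T);H^{\gamma_0+2})}$. The single genuinely delicate step, on which I would spend the most care, is the coefficient splitting $v(t_n)+v^n=2v(t_n)-e^n$, which trades the uncontrollable high norm of $v^n$ for the controllable high norm of $v(t_n)$ plus a cubic self-interaction of the small error.
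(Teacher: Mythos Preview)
Your proposal is correct and follows the paper's route: bootstrap induction on $\|e^n\|_{H^{\gamma_0}}$, the decisive coefficient splitting $v(t_n)+v^n=2v(t_n)-e^n$ handled by Lemma~\ref{lm3.5}(i) on the smooth part and Lemma~\ref{lm3.5}(ii) on the cubic self-interaction, squared-norm expansion with Lemmas~\ref{lem:bi-est}--\ref{lem:tri-est} for the remaining pieces, and discrete Gronwall. The stability argument is identical to the paper's.

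Where you diverge is the local-error treatment, and there you work harder than needed. You Taylor-expand to second order, invoke Lemma~\ref{lem:two-form}(i) for $\partial_t^2v$, and recover a derivative via the $\partial_x^{-1}$-gain (which, strictly, requires a Fubini in $s$ and $\theta$ to freeze the $s$-dependence of $r(s)$ before Lemma~\ref{lem:two-form}(ii) applies---you do not mention this). The paper bypasses all of it: rather than passing through \eqref{11.22} here, it writes $v^{n+1}-v(t_{n+1})=(v^n-v(t_n))+L_1+L_2$ directly, and the genuinely local pieces $L_{12}$ and \eqref{16.28-2} are each bounded by a crude \emph{first-order} estimate $\|v(t_n+s)-v(t_n)\|_{H^{\gamma_0+1}}\lesssim\tau\|v\|_{H^{\gamma_0+2}}^2$ and a direct product bound, already giving $O(\tau^2)$ in $H^{\gamma_0}$ under mere $H^{\gamma_0+2}$ control. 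Lemma~\ref{lem:two-form}(i) is never invoked in this lemma; it is reserved for the sharp estimate of Lemma~\ref{lem:L}, where $H^{\gamma+4}$ is available. Your route reaches the same conclusion with more machinery; the paper's shows that for the a~priori bound the second-order structure of $\mathcal L^n$ is irrelevant.
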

\begin{proof}
The proof goes in the manner of  bootstrap argument by assuming that $v^n\in H^{\gamma_0}$ for some $0\leq n\leq \frac{T}{\tau}$.
Taking the difference between \eqref{LRI-discrete} and the exact Duhamel's formula (\ref{v eq}), we have
\begin{align*}
v^{n+1}-v(t_{n+1})
 =v^n-v(t_n)+L_1+L_2,\quad n=0,\ldots,\frac{T}{\tau}-1,
\end{align*}
where  we denote
\begin{align*}
 L_1=&\frac{1}{2}\int_0^\tau
 \fe^{(t_n+s)\partial_x^3}\partial_x\left[\Big(\fe^{-(t_n+s)\partial_x^3}
 v^n\Big)^2-\Big(\fe^{-(t_n+s)\partial_x^3}
 v(t_n+s)\Big)^2\right]ds,\\
 L_2=&\int_0^\tau s
 \fe^{(t_n+s)\partial_x^3}\partial_x\left[\fe^{-(t_n+s)\partial_x^3}
 v^n\cdot \fe^{-(t_n+s)\partial_x^3}{v}_t^{n}\right]ds.
\end{align*}
Thus, we get that
\begin{align}
\big\|v^{n+1}-v(t_{n+1})\big\|_{H^{\gamma_0}}^2\le& \big\|v^n-v(t_n)\big\|_{H^{\gamma_0}}^2+
2\big\langle J^{\gamma_0}(v^n-v(t_n)), J^{\gamma_0}L_1\big\rangle\notag\\
 &+
2\big\langle J^{\gamma_0}(v^n-v(t_n)), J^{\gamma_0}L_2\big\rangle
+2\big\|L_1\big\|_{H^{\gamma_0}}^2+2\big\|L_2\big\|_{H^{\gamma_0}}^2.\label{vn-vtn-iteration}
\end{align}
In the following, we shall give estimate of the right-hand side of (\ref{vn-vtn-iteration}) term by term.

Firstly, we decompose $L_1$ into two parts as
\begin{align}
L_1
 =&\frac{1}{2}\int_0^\tau
 \fe^{(t_n+s)\partial_x^3}\partial_x\left[\Big(\fe^{-(t_n+s)\partial_x^3}
 v^n\Big)^2-\Big(\fe^{-(t_n+s)\partial_x^3}
 v(t_n)\Big)^2\right]ds\notag\\
 &+\frac{1}{2}\int_0^\tau
 \fe^{(t_n+s)\partial_x^3}\partial_x\left[\Big(\fe^{-(t_n+s)\partial_x^3}
 v(t_n)\Big)^2-\Big(\fe^{-(t_n+s)\partial_x^3}
 v(t_n+s)\Big)^2\right]ds\notag\\
 =:&L_{11}+L_{12}.\label{13.00}
\end{align}
Then we write
$$
\big\langle J^{\gamma_0} (v^n-v(t_n)), J^{\gamma_0}L_1\big\rangle
=\big\langle J^{\gamma_0} (v^n-v(t_n)), J^{\gamma_0}L_{11}\big\rangle+
\big\langle J^{\gamma_0} (v^n-v(t_n)), J^{\gamma_0}L_{12}\big\rangle.
$$
For the first part, we have
\begin{subequations}
\begin{align}
&\quad2\big\langle J^{\gamma_0} (v^n-v(t_n)), J^{\gamma_0}L_{11}\big\rangle\notag\\
 &=\int_0^\tau\left\langle J^{\gamma_0}\fe^{-(t_n+s)\partial_x^3}(v^n-v(t_n)),
J^{\gamma_0}\partial_x\left[\left(\fe^{-(t_n+s)\partial_x^3}
 \left(v^n-v(t_n)\right)\right)^2\right]\right\rangle ds \label{9.04-1}\\
&\ \,+2\int_0^\tau\Big\langle J^{\gamma_0}\fe^{-(t_n+s)\partial_x^3}(v^n-v(t_n)),
J^{\gamma_0}\partial_x\left[\fe^{-(t_n+s)\partial_x^3}\big(v^n-v(t_n)\big)\cdot\fe^{-(t_n+s)\partial_x^3}
 v(t_n)\right]\Big\rangle ds. \label{9.04-2}
\end{align}
\end{subequations}
For \eqref{9.04-1}, using Lemma \ref{lm3.5}-(ii), we get that for any $\gamma_0>\frac32$,
\begin{align*}
\big|\eqref{9.04-1}\big|\lesssim \tau \big\|v^n-v(t_n)\big\|_{H^{\gamma_0}}^3.
\end{align*}
For \eqref{9.04-2}, using Lemma \ref{lm3.5}-(i) instead,  we get that for any $\gamma\geq0,\gamma_1>\frac12$,
\begin{align*}
\big|\eqref{9.04-2}\big|\lesssim \tau \left\|v(t_n)\right\|_{H^{\gamma+\gamma_1+1}}
\left\|v^n-v(t_n)\right\|_{H^{\gamma}}^2,
\end{align*}
and then for any $\gamma_0>\frac32$, by properly choosing the $\gamma,\gamma_1$ and the assumption of the lemma with $\gamma_0=\gamma+\gamma_1+1$, we have
$$\|v(t_n)\|_{H^{\gamma+\gamma_1+1}}\lesssim1.$$
Hence, in total we obtain that
\begin{align}
\big|\big\langle J^{\gamma_0} (v^n-v(t_n)), J^{\gamma_0}L_{11}\big\rangle\big|
\le C\tau \Big(\big\|v^n-v(t_n)\big\|_{H^{\gamma_0}}^2+ \big\|v^n-v(t_n)\big\|_{H^{\gamma_0}}^3\Big),\label{13.12}
\end{align}
where the constant $C>0$ depends only on $\|v\|_{L^\infty((t_n,t_{n+1});H^{\gamma_0+\gamma_1+1})}$.

For $\big\langle J^{\gamma_0} (v^n-v(t_n)), J^{\gamma_0}L_{12}\big\rangle$, we  claim that
\begin{align}\label{L12}
\big\|L_{12}\big\|_{H^{\gamma_0}}\lesssim \tau^2 \|v\|_{L^\infty((t_n,t_{n+1});H^{\gamma_0+2})}^3.
\end{align}
Indeed,
using Lemma \ref{lem:kato-Ponce} (i),  we get that
\begin{align*}
\big\|L_{12}\big\|_{H^{\gamma_0}}
\le &\frac12 \int_0^\tau \Big\|J^{\gamma_0}\partial_x\left[\fe^{-(t_n+s)\partial_x^3}\big(v(t_n)-v(t_n+s)\big)\cdot\fe^{-(t_n+s)\partial_x^3}
 \big(v(t_n)+v(t_n+s)\big)\right]\Big\|_{L^2}\,ds\\
 \lesssim &  \int_0^\tau \Big\|J^{\gamma_0+1}\left[\fe^{-(t_n+s)\partial_x^3}\big(v(t_n)-v(t_n+s)\big)\cdot\fe^{-(t_n+s)\partial_x^3}
 \big(v(t_n)+v(t_n+s)\big)\right]\Big\|_{L^2}\,ds\\
 \lesssim & \tau \big\|v(t_n+s)-v(t_n)\big\|_{L^\infty((0,\tau);H^{\gamma_0+1})}\|v\|_{L^\infty((t_n,t_{n+1});H^{\gamma_0+1})}.
\end{align*}
Note that
\begin{align*}
\big\|v(t_n+s)-v(t_n)\big\|_{L^\infty((0,\tau);H^{\gamma_0+1})}
=&\Big\|\int_0^s \partial_tv(t_n+t)\,dt\Big\|_{L^\infty((0,\tau);H^{\gamma_0+1})}\\
\le & \tau\big\|\partial_tv(t)\big\|_{L^\infty((t_n,t_{n+1});H^{\gamma_0+1})}.
\end{align*}
Now we need the estimate on $\partial_tv(t_n)$. From the definition \eqref{vt eq}, using  Lemma \ref{lem:kato-Ponce} (i), we have that for any $\gamma\geq0$,
\begin{align}
\|\partial_tv(t_n)\|_{H^{\gamma}}
&\lesssim\big\|\big(\fe^{-t_n\partial_x^3}v(t_n)\big)^2\big\|_{H^{\gamma+1}}
\lesssim\big\|v(t_n)\big\|_{H^{\gamma+1}}^2.\label{pt-v}
\end{align}
Using \eqref{pt-v},  we have
\begin{align*}
\big\|v(t_n+s)-v(t_n)\big\|_{L^\infty((0,\tau);H^{\gamma_0+1})}
\le & \tau\|v\|_{L^\infty((t_n,t_{n+1});H^{\gamma_0+2})}^2.
\end{align*}
Hence, we obtain \eqref{L12} and then we get
\begin{align*}
\big|\big\langle J^{\gamma_0} (v^n-v(t_n)), J^{\gamma_0}L_{12}\big\rangle\big|
 \lesssim & \tau^2 \big\|v^n-v(t_n)\big\|_{H^{\gamma_0}}\|v\|_{L^\infty((t_n,t_{n+1});H^{\gamma_0+2})}^3.
\end{align*}
The last estimate together with \eqref{13.12} and Cauchy-Schwartz's inequality,  we establish that
\begin{align}\label{L1-2}
\big|\big\langle J^{\gamma_0} (v^n-v(t_n)), J^{\gamma_0}L_1\big\rangle\big|
\le & C \tau \Big(\big\|v^n-v(t_n)\big\|_{H^{\gamma_0}}^2+ \big\|v^n-v(t_n)\big\|_{H^{\gamma_0}}^3\Big)+C\tau^2 \big\|v^n-v(t_n)\big\|_{H^{\gamma_0}}\notag\\
\le & C \tau \Big(\big\|v^n-v(t_n)\big\|_{H^{\gamma_0}}^2+ \big\|v^n-v(t_n)\big\|_{H^{\gamma_0}}^3\Big)+C\tau^3,
\end{align}
where the constant $C>0$ depends only on $\|v\|_{L^\infty((t_n,t_{n+1});H^{\gamma_0+2})}$.

Now we consider $\|L_j\|_{H^{\gamma_0}}$ for $j=1,2$ in (\ref{vn-vtn-iteration}). For $L_1$, from \eqref{13.00} and \eqref{L12}, we only need to consider $L_{11}$.
Indeed, from  \eqref{bi-est-2}, we have
\begin{align*}
\big\|L_{11}\big\|_{H^{\gamma_0}}
\lesssim & \sqrt\tau \big\|v^n-v(t_n)\big\|_{H^{\gamma_0}}\big\|v^n+v(t_n)\big\|_{H^{\gamma_0}}\\
\lesssim & \sqrt\tau \big\|v^n-v(t_n)\big\|_{H^{\gamma_0}}^2+\sqrt\tau \big\|v(t_n)\big\|_{H^{\gamma_0}}\big\|v^n-v(t_n)\big\|_{H^{\gamma_0}}.
\end{align*}
The above estimate together with \eqref{L12} give
\begin{align}\label{L1}
\big\|L_{1}\big\|_{H^{\gamma_0}}\le C\sqrt\tau \Big(\big\|v^n-v(t_n)\big\|_{H^{\gamma_0}}+\big\|v^n-v(t_n)\big\|_{H^{\gamma_0}}^2\Big)+C\tau^2,
\end{align}
where the constant $C>0$ depends only on $\|v\|_{L^\infty((t_n,t_{n+1});H^{\gamma_0+2})}$.

For $L_2$, we write
\begin{subequations}
\begin{align}
&L_2=\int_0^\tau s
 \fe^{(t_n+s)\partial_x^3}\partial_x\left[\fe^{-(t_n+s)\partial_x^3}
 v^n\cdot \fe^{-(t_n+s)\partial_x^3}{v}_t^{n}\right]ds\notag\\
 =&\int_0^\tau s
 \fe^{(t_n+s)\partial_x^3}\partial_x\left[\fe^{-(t_n+s)\partial_x^3}
 v^n\cdot \fe^{-(t_n+s)\partial_x^3}{v}_t^{n}-\fe^{-(t_n+s)\partial_x^3}
 v(t_n)\cdot \fe^{-(t_n+s)\partial_x^3}{\partial_tv}(t_n)\right]ds\label{16.28-1}\\
 &+\int_0^\tau s
 \fe^{(t_n+s)\partial_x^3}\partial_x\left[\fe^{-(t_n+s)\partial_x^3}
 v(t_n)\cdot \fe^{-(t_n+s)\partial_x^3}{\partial_tv}(t_n)\right]ds.\label{16.28-2}
\end{align}
\end{subequations}
For \eqref{16.28-1}, from \eqref{vt eq} and \eqref{vn-t}, and Lemma \ref{lem:tri-est}-(ii), we get
\begin{align*}
\big\|\eqref{16.28-1}\big\|_{H^{\gamma_0}}
\lesssim & \tau\big\|v^n-v(t_n)\big\|_{H^{\gamma_0}}\Big(\big\|v^n\big\|_{H^{\gamma_0}}^2+\big\|v(t_n)\big\|_{H^{\gamma_0}}^2\Big)\\
\lesssim &\tau\big\|v^n-v(t_n)\big\|_{H^{\gamma_0}}^3+\tau\big\|v^n-v(t_n)\big\|_{H^{\gamma_0}}\big\|v(t_n)\big\|_{H^{\gamma_0}}^2.
\end{align*}
For \eqref{16.28-2},  by  Lemma \ref{lem:kato-Ponce} (i), we get
\begin{align*}
\big\|\eqref{16.28-2}\big\|_{H^{\gamma_0}}
\lesssim & \tau^2\Big\|\partial_x\Big[\fe^{-(t_n+s)\partial_x^3}
 v(t_n)\cdot \fe^{-(t_n+s)\partial_x^3}{\partial_tv}(t_n)\Big]\Big\|_{H^{\gamma_0}}\\
\lesssim &\tau^2 \|v(t_n)\|_{H^{\gamma_0+1}}\|\partial_tv(t_n)\|_{H^{\gamma_0+1}}.
\end{align*}
Then using \eqref{pt-v}, we get
\begin{align*}
\big\|\eqref{16.28-2}\big\|_{H^{\gamma_0}}
\lesssim \tau^2\big\|v(t_n)\big\|_{H^{\gamma_0+2}}^3.
\end{align*}
Combining with these two estimates yields
\begin{align}\label{L2}
\big\|L_{2}\big\|_{H^{\gamma_0}}\le C\tau \Big(\big\|v^n-v(t_n)\big\|_{H^{\gamma_0}}+\big\|v^n-v(t_n)\big\|_{H^{\gamma_0}}^3\Big)+C\tau^2,
\end{align}
and thus by H\"older's and Cauchy-Schwartz's inequalities,
\begin{align}\label{L2-2}
\big\langle J^{\gamma_0}(v^n-v(t_n)), J^{\gamma_0}L_2\big\rangle
\le C\tau \Big(\big\|v^n-v(t_n)\big\|_{H^{\gamma_0}}^2+\big\|v^n-v(t_n)\big\|_{H^{\gamma_0}}^4\Big)+C\tau^3,
\end{align}
where the constant $C>0$ depends only on $\|v\|_{L^\infty((t_n,t_{n+1});H^{\gamma_0+2})}$.

Now inserting the estimates \eqref{L1-2}, \eqref{L1}, \eqref{L2} and \eqref{L2-2} into \eqref{vn-vtn-iteration}, we obtain that
\begin{align*}
\big\|v^{n+1}-v(t_{n+1})\big\|_{H^{\gamma_0}}^2\le \big\|v^n-v(t_n)\big\|_{H^{\gamma_0}}^2+C\tau \Big(\big\|v^n-v(t_n)\big\|_{H^{\gamma_0}}^2+\big\|v^n-v(t_n)\big\|_{H^{\gamma_0}}^4\Big)+C\tau^3.
\end{align*}
This implies that
\begin{align*}
\big\|v^{n+1}-v(t_{n+1})\big\|_{H^{\gamma_0}}\le (1+C\tau)\big\|v^n-v(t_n)\big\|_{H^{\gamma_0}}+C\tau \big\|v^n-v(t_n)\big\|_{H^{\gamma_0}}^2+C\tau^\frac32.
\end{align*}
Noting $v^0=v(0)$ and by Gronwall's inequality, we obtain that
\begin{align*}
\big\|v^{n+1}-v(t_{n+1})\big\|_{H^{\gamma_0}}\le C\tau^\frac32\sum\limits_{j=0}^{n+1}\big(1+C\tau\big)^j
\le C\tau^\frac12\big(1+C\tau\big)^{\frac T\tau}\le C\tau^\frac12 \fe^{CT},
\end{align*}
for $0\leq n\leq T/\tau-1$.
This proves the claimed result  of the lemma when $0<\tau\leq\tau_0$ for some $\tau_0$ depending on $T$ and $\|v\|_{L^\infty((0,T);H^{\gamma_0+2})}$.

\end{proof}

\subsection{Stability}\label{subsec3}
 Now we give stability result of the numerical propagator $\Phi^n$ defined in (\ref{phin def}) in the following lemma.

\begin{lemma}\label{lem:stab} (Stability)
Let $\gamma\ge 0$ and $v_0\in H^{\gamma+4}$, then there exist some constant $C,\tau_0>0$, such that for any $0<\tau\leq\tau_0$,
$$
\big\|\Phi^n\big(v(t_n)\big)-\Phi^n\big(v^n\big)\big\|_{H^\gamma}
\le \big(1+C\tau\big)\|v(t_n)-v^n\|_{H^\gamma},\quad n=0,1,\ldots,\frac{T}{\tau}-1,
$$
 where the constants $C,\tau_0$ depend only on $T$ and $\|v\|_{L^\infty((0,T);H^{\gamma+4})}$.
\end{lemma}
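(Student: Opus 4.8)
The plan is to write the difference of the propagators as the linear part plus a quadratic and a cubic correction, and to estimate the \emph{squared} $H^\gamma$-norm rather than the norm itself. Setting $e:=v(t_n)-v^n$ and using $a^2-b^2=(a-b)(a+b)$ together with the trilinearity of the last integrand in \eqref{phin def}, I would write $\Phi^n(v(t_n))-\Phi^n(v^n)=e+Q+R$, where
\begin{align*}
Q:=\frac12\int_0^\tau\fe^{(t_n+s)\partial_x^3}\partial_x\big(\fe^{-(t_n+s)\partial_x^3}e\cdot\fe^{-(t_n+s)\partial_x^3}(v(t_n)+v^n)\big)\,ds
\end{align*}
is the quadratic difference and $R$ is the cubic difference coming from the $s$-weighted integral. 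Expanding the square,
\begin{align*}
\big\|\Phi^n(v(t_n))-\Phi^n(v^n)\big\|_{H^\gamma}^2=\|e\|_{H^\gamma}^2+2\big\langle J^\gamma e,J^\gamma Q\big\rangle+2\big\langle J^\gamma e,J^\gamma R\big\rangle+\|Q+R\|_{H^\gamma}^2,
\end{align*}
and the goal is to show that each of the last three terms is $O(\tau\|e\|_{H^\gamma}^2)$, so that the whole expression is $\le(1+C\tau)\|e\|_{H^\gamma}^2$ and the claim follows after taking square roots.

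The decisive term is the cross term $\langle J^\gamma e,J^\gamma Q\rangle$, and here lies the main obstacle: a direct norm bound of $Q$ through Lemma~\ref{lem:bi-est} gives only $\|Q\|_{H^\gamma}\lesssim\sqrt\tau\,\|e\|_{H^\gamma}$, which would produce a stability constant $1+C\sqrt\tau$ — fatal after $T/\tau$ iterations. To recover the full power of $\tau$ I would exploit the (anti)symmetric structure: since $\fe^{(t_n+s)\partial_x^3}$ is unitary and commutes with $J^\gamma$ and $\partial_x$, moving it onto the first slot reduces the $s$-integrand to $\langle J^\gamma\partial_x(\tilde e\,\tilde h),J^\gamma\tilde e\rangle$ with $\tilde e=\fe^{-(t_n+s)\partial_x^3}e$ and $\tilde h=\fe^{-(t_n+s)\partial_x^3}(v(t_n)+v^n)$. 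This is exactly the form handled by Lemma~\ref{lm3.5}-(i), whose proof integrates by parts in the leading term to cancel the derivative loss, yielding $\lesssim\|\tilde e\|_{H^\gamma}^2\|\tilde h\|_{H^{\gamma+\gamma_1+1}}$. By unitarity this equals $\|e\|_{H^\gamma}^2\|v(t_n)+v^n\|_{H^{\gamma+\gamma_1+1}}$, and integrating over $s\in[0,\tau]$ supplies the factor $\tau$, so $|\langle J^\gamma e,J^\gamma Q\rangle|\lesssim\tau\|e\|_{H^\gamma}^2\|v(t_n)+v^n\|_{H^{\gamma+\gamma_1+1}}$.

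The remaining terms are softer. For $R$ I would telescope the cubic difference, using trilinearity and the symmetry of the squared factor, into three pieces each carrying exactly one factor of $e$, and apply Lemma~\ref{lem:tri-est}-(i) with a fixed $\gamma_1\in(\tfrac12,1]$; choosing $\gamma_1\le1$ guarantees that the weakest Sobolev index $\gamma+\gamma_1-1$ that can fall on $e$ satisfies $\gamma+\gamma_1-1\le\gamma$, so every piece is bounded by $\tau\|e\|_{H^\gamma}$ times fixed norms of $v(t_n)$ and $v^n$. Hence $\|R\|_{H^\gamma}\lesssim\tau\|e\|_{H^\gamma}$, which controls both $\langle J^\gamma e,J^\gamma R\rangle\le\|e\|_{H^\gamma}\|R\|_{H^\gamma}\lesssim\tau\|e\|_{H^\gamma}^2$ and the $R$-part of $\|Q+R\|_{H^\gamma}^2\le2\|Q\|_{H^\gamma}^2+2\|R\|_{H^\gamma}^2$; the $Q$-part is $\lesssim\tau\|e\|_{H^\gamma}^2$ directly from the $\sqrt\tau$ bound of Lemma~\ref{lem:bi-est}, since it is already squared.

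Finally, all auxiliary norms of the two arguments must be shown to be $O(1)$. For $v(t_n)$ this is immediate from $v_0\in H^{\gamma+4}$, since $\gamma+\gamma_1+1\le\gamma+2\le\gamma+4$ for $\gamma_1\le1$; for the numerical solution $v^n$ I would invoke the \emph{a priori} bound Lemma~\ref{lem:est-vn} with $\gamma_0=\gamma+2>\tfrac32$, which is admissible precisely because $v_0\in H^{\gamma_0+2}=H^{\gamma+4}$, giving $\|v^n\|_{H^{\gamma+2}}\le C$ and hence control of all lower-order norms appearing above. The zero-mean conditions required by Lemmas~\ref{lem:bi-est} and \ref{lem:tri-est} hold because both the unitary flow and the scheme preserve the vanishing zero mode, so $\widehat e(0)=0$. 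Collecting the three estimates yields $\|\Phi^n(v(t_n))-\Phi^n(v^n)\|_{H^\gamma}^2\le(1+C\tau)\|e\|_{H^\gamma}^2$, and since $\sqrt{1+C\tau}\le1+C\tau$ the stated bound follows, with $C,\tau_0$ depending only on $T$ and $\|v\|_{L^\infty((0,T);H^{\gamma+4})}$.
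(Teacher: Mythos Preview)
Your proposal is correct and follows essentially the same route as the paper: expand the squared $H^\gamma$-norm, handle the critical cross term $\langle J^\gamma e,J^\gamma Q\rangle$ via the integration-by-parts estimate of Lemma~\ref{lm3.5}-(i), bound $\|Q\|_{H^\gamma}^2$ with the $\sqrt\tau$ estimate of Lemma~\ref{lem:bi-est}, treat the cubic difference $R$ with Lemma~\ref{lem:tri-est}-(i), and close with the \emph{a priori} bound on $\|v^n\|_{H^{\gamma+2}}$ from Lemma~\ref{lem:est-vn} applied with $\gamma_0=\gamma+2$. The paper organizes the cubic difference as two terms (via $f_2=\partial_tv(t_n)-v_t^n$ and $g_2=\partial_tv(t_n)+v_t^n$) rather than your three, but this is purely cosmetic; your observation that one may always place $e$ in the $f_1$- or $f_3$-slot of Lemma~\ref{lem:tri-est}-(i) (hence at index $\le\gamma$ once $\gamma_1\le 1$) is exactly what the paper exploits as well.
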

\begin{proof}
We denote $f_1=v(t_n)-v^n$, $f_2=\partial_tv(t_n)-{v}_t^{n}$, $g_1=v(t_n)+v^n$, $g_2=\partial_tv(t_n)+{v}_t^{n}$ for $0\leq n\leq T/\tau-1$, then
\begin{align*}
\Phi^n\big(v(t_n)\big)-\Phi^n\big(v^n\big)
=&f_1+\frac12\int_0^\tau \fe^{(t_n+s)\partial_x^3}\partial_x\Big(\fe^{-(t_n+s)\partial_x^3}f_1\cdot \fe^{-(t_n+s)\partial_x^3}g_1\Big)\,ds\\
&+\frac12\int_0^\tau s\fe^{(t_n+s)\partial_x^3}\partial_x\Big(\fe^{-(t_n+s)\partial_x^3}f_1\cdot \fe^{-(t_n+s)\partial_x^3}g_2\Big)\,ds\\
&+\frac12\int_0^\tau s\fe^{(t_n+s)\partial_x^3}\partial_x\Big(\fe^{-(t_n+s)\partial_x^3}f_2\cdot \fe^{-(t_n+s)\partial_x^3}g_1\Big)\,ds.
\end{align*}
Hence, we have
\begin{subequations}
\begin{align}
\quad&\big\|\Phi\big(v(t_n)\big)-\Phi\big(v^n\big)\big\|_{H^\gamma}^2\nonumber
\\\le& \|f_1\|_{H^\gamma}^2\notag\\
&+\Big\langle J^\gamma\int_0^\tau \fe^{(t_n+s)\partial_x^3}\partial_x\Big(\fe^{-(t_n+s)\partial_x^3}f_1\cdot \fe^{-(t_n+s)\partial_x^3}g_1\Big)\,ds,J^\gamma f_1\Big\rangle\label{0.17-1}\\
&+\Big\langle J^\gamma\int_0^\tau s\fe^{(t_n+s)\partial_x^3}\partial_x\Big(\fe^{-(t_n+s)\partial_x^3}f_1\cdot \fe^{-(t_n+s)\partial_x^3}g_2\Big)\,ds,J^\gamma f_1\Big\rangle\label{0.17-3}\\
&+\Big\langle J^\gamma\int_0^\tau s\fe^{(t_n+s)\partial_x^3}\partial_x\Big(\fe^{-(t_n+s)\partial_x^3}f_2\cdot \fe^{-(t_n+s)\partial_x^3}g_1\Big)\,ds,J^\gamma f_1\Big\rangle\label{0.17-2}\\
&+\frac34\Big\|\int_0^\tau \fe^{(t_n+s)\partial_x^3}\partial_x\Big(\fe^{-(t_n+s)\partial_x^3}f_1\cdot \fe^{-(t_n+s)\partial_x^3}g_1\Big)\,ds\Big\|_{H^\gamma}^2\label{0.17-4}\\
&+\frac34\Big\|\int_0^\tau s\fe^{(t_n+s)\partial_x^3}\partial_x\Big(\fe^{-(t_n+s)\partial_x^3}f_1\cdot \fe^{-(t_n+s)\partial_x^3}g_2\Big)\,ds\Big\|_{H^\gamma}^2\label{0.17-6}\\
&+\frac34\Big\|\int_0^\tau s\fe^{(t_n+s)\partial_x^3}\partial_x\Big(\fe^{-(t_n+s)\partial_x^3}f_2\cdot \fe^{-(t_n+s)\partial_x^3}g_1\Big)\,ds\Big\|_{H^\gamma}^2\label{0.17-5}.
\end{align}
\end{subequations}
Now we estimate \eqref{0.17-1}--\eqref{0.17-5} term by term.

We begin with estimate of \eqref{0.17-1}. 
Applying Lemma \ref{lm3.5}-(i), we get for any $\gamma\geq0$,
\begin{align*}
|\eqref{0.17-1}|
\lesssim  \tau\big\|f_1\big\|_{H^\gamma}^2 \big\|g_1\big\|_{H^{\gamma+2}}.
\end{align*}
From the \emph{a prior} estimate in Lemma \ref{lem:est-vn}, we have that when $0<\tau\leq \tau_0$,
\begin{align}
\big\|g_1\big\|_{H^{\gamma+2}}\le C, \label{est:g1}
\end{align}
for some $\tau_0,C$ depend on $T$ and $\|v\|_{L^\infty((0,T);H^{\gamma+4})}$.
Hence, we further obtain
\begin{align}
|\eqref{0.17-1}|
\le C\tau\big\|f_1\big\|_{H^\gamma}^2.\label{est:0.17-1}
\end{align}

Now we estimate the terms \eqref{0.17-3} and \eqref{0.17-6} which can be done in the same manner. To do this, by the formula
 $$
 g_2=\frac12\fe^{t_n\partial_x^3}\partial_x\Big[\big(\fe^{-t_n\partial_x^3}v(t_n)\big)^2+\big(\fe^{-t_n\partial_x^3}v^n\big)^2\Big],
 $$
and Lemma \ref{lem:tri-est}-(i), we have for $\gamma\ge0, \gamma_1>\frac12$,
\begin{align*}
\Big\|\int_0^\tau\!\! s &\fe^{(t_n+s)\partial_x^3}\partial_x\Big(\fe^{-(t_n+s)\partial_x^3}f_1\cdot \fe^{-(t_n+s)\partial_x^3}g_2\Big)\,ds\Big\|_{H^\gamma}
\lesssim
\tau\|f_1\|_{H^{\gamma+\gamma_1-1}}\Big(\big\|v(t_n)\big\|_{H^{\gamma+\gamma_1}}^2+\big\|v^n\big\|_{H^{\gamma+\gamma_1}}^2\Big).
\end{align*}
From Lemma \ref{lem:est-vn}, we further get
$$
\Big\|\int_0^\tau\!\! s \fe^{(t_n+s)\partial_x^3}\partial_x\Big(\fe^{-(t_n+s)\partial_x^3}f_1\cdot \fe^{-(t_n+s)\partial_x^3}g_2\Big)\,ds\Big\|_{H^\gamma}
\le C\tau\|f_1\|_{H^{\gamma}},
$$
where the constant $C>0$ depends only on $\|v\|_{L^\infty((0,T);H^{\gamma+4})}$.
By this estimate, we get
\begin{subequations}\label{est:0.17-3and6}
\begin{align}
|\eqref{0.17-3}|&\le C\tau\|f_1\|_{H^{\gamma}}^2,\label{est:0.17-3}\\
|\eqref{0.17-6}|&\le C \tau^2\|f_1\|_{H^\gamma}^2.\label{est:0.17-6}
\end{align}
\end{subequations}

Next, we treat the terms \eqref{0.17-2} and \eqref{0.17-5}  in the same manner. Using the relationship
 $$
 f_2=\frac12\fe^{t_n\partial_x^3}\partial_x\Big(\fe^{-t_n\partial_x^3}f_1\cdot\fe^{-t_n\partial_x^3}g_1\Big),
 $$
and Lemma \ref{lem:tri-est}-(i), we have for $\gamma\ge0, \gamma_1>\frac12$,
\begin{align*}
\Big\|\int_0^\tau s &\fe^{(t_n+s)\partial_x^3}\partial_x\Big(\fe^{-(t_n+s)\partial_x^3}f_2\cdot \fe^{-(t_n+s)\partial_x^3}g_1\Big)\,ds\Big\|_{H^\gamma}
\lesssim
\tau\|f_1\|_{H^\gamma}\|g_1\|_{H^{\gamma+\gamma_1}}^2.
\end{align*}
Using this estimate and \eqref{est:g1}, we get
\begin{subequations}\label{est:0.17-2and5}
\begin{align}
|\eqref{0.17-2}|&\le C\tau\|f_1\|_{H^\gamma}^2,\label{est:0.17-2}\\
|\eqref{0.17-5}|&\le C\tau^2\|f_1\|_{H^\gamma}^2.\label{est:0.17-5}
\end{align}
\end{subequations}

Now it is left to consider \eqref{0.17-4}. By Lemma \ref{lem:bi-est} and \eqref{est:g1}, we get for $\gamma\ge0, \gamma_1>\frac12$
\begin{align}
|\eqref{0.17-4}|\le C\tau\big\|f_1\big\|_{H^\gamma}^2\big\|g_1\big\|_{H^{\gamma+\gamma_1}}^2\le C\tau\big\|f_1\big\|_{H^\gamma}^2.\label{est:0.17-4}
\end{align}

Combining the estimates \eqref{est:0.17-1}-\eqref{est:0.17-4},  we conclude that
$$
\big\|\Phi^n\big(v(t_n)\big)-\Phi^n\big(v^n\big)\big\|_{H^\gamma}^2
\le \|f_1\|_{H^\gamma}^2+C\tau\big\|f_1\big\|_{H^\gamma}^2,
$$
where $C>0$ depends on $T$ and $\|v\|_{L^\infty((0,T);H^{\gamma+4})}$.  Since $\sqrt{1+C\tau}\sim 1+C\tau$ when $\tau$ is small enough, we finish the proof of the lemma.
\end{proof}

\subsection{Local error}\label{subsec2}
Next,  we have the following optimal estimate for the local error term $\mathcal{L}^n$ in \eqref{11.26}.
\begin{lemma}\label{lem:L} (Local error estimate) Let $\mathcal{L}^n$ be defined in \eqref{11.26} and $\gamma\ge 0$, then  we have
$$
  \|\mathcal{L}^n\|_{H^\gamma}\leq C\tau^3,\quad n=0,1\ldots,\frac{T}{\tau}-1,
$$
 where the constant $C>0$ depends only on $T$ and $\|u\|_{L^\infty((0,T);H^{\gamma+4})}$.
\end{lemma}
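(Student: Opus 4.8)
The local error $\mathcal{L}^n$ measures the defect between the exact Duhamel increment and the second-order Taylor approximation used in the scheme. The natural strategy is a Taylor expansion with integral remainder. The idea is to write $v(t_n+s)=v(t_n)+s\partial_tv(t_n)+R(s)$, where the remainder is the second-order integral term $R(s)=\int_0^s (s-\sigma)\partial_t^2v(t_n+\sigma)\,d\sigma$, and substitute this into the first integral in \eqref{11.26}. When the square $\big(\fe^{-(t_n+s)\partial_x^3}v(t_n+s)\big)^2$ is expanded using this decomposition, the zeroth-order term cancels the second integral in \eqref{11.26}, the cross term $2v(t_n)\cdot s\partial_tv(t_n)$ cancels against the third integral, and what survives is a collection of quadratic expressions in which at least one factor is either $s\partial_tv(t_n)$ (squared, carrying $s^2$) or the remainder $R(s)$ (carrying an $s^2$ through the $(s-\sigma)$ weight). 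Each surviving piece therefore formally carries two powers of $s$, and after the outer $ds$-integration over $[0,\tau]$ this produces the desired $\tau^3$.

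First I would expand the square and isolate the surviving terms, organizing them by which factors appear. The genuinely quadratic-in-$s$ terms are $\frac12\int_0^\tau \fe^{(t_n+s)\partial_x^3}\partial_x\big(\fe^{-(t_n+s)\partial_x^3}s\partial_tv(t_n)\big)^2\,ds$ together with the cross and square terms involving the remainder $R(s)$. For the pure $\big(s\partial_tv\big)^2$ term, the weight $s^2$ is explicit and I would apply Lemma \ref{lem:bi-est} (with $f=g=\partial_tv(t_n)$, after pulling out one factor of $s$ and controlling the other) or more directly a Kato--Ponce estimate after the unitary operators are removed, bounding $\|\partial_tv(t_n)\|_{H^{\gamma+1}}\lesssim\|v(t_n)\|_{H^{\gamma+2}}^2$ via \eqref{pt-v}. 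For the remainder terms I would use that $\partial_t^2v$ is controlled in $H^{\gamma+1}$: here the key fact Lemma \ref{lem:two-form}-(i) enters, since it expresses $\partial_t^2v$ as $\frac32\fe^{t\partial_x^3}\partial_x^2(\fe^{-t\partial_x^3}\partial_xv)^2+\frac13\fe^{t\partial_x^3}\partial_x^2(\fe^{-t\partial_x^3}v)^3$, and applying the Kato--Ponce inequality Lemma \ref{lem:kato-Ponce} to both the quadratic and cubic pieces gives $\|\partial_t^2v(t)\|_{H^{\gamma}}\lesssim \|v(t)\|_{H^{\gamma+2}}^2(1+\|v(t)\|_{H^{\gamma+2}})$, which is finite under the hypothesis $u_0\in H^{\gamma+4}$.

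The estimate then proceeds term by term. After the cancellations, every surviving summand has the schematic form $\int_0^\tau \fe^{(t_n+s)\partial_x^3}\partial_x\big(\fe^{-(t_n+s)\partial_x^3}a_s\cdot\fe^{-(t_n+s)\partial_x^3}b_s\big)\,ds$ where the pair $(a_s,b_s)$ contributes a total factor of $s^2$ (either $s^2$ explicitly, or $s\cdot R(s)$ with $\|R(s)\|\lesssim s^2\sup\|\partial_t^2v\|$, or $R(s)$ paired against a bounded factor). Taking the $H^\gamma$ norm inside the integral, discarding the unitary $\fe^{\pm(t_n+s)\partial_x^3}$, and applying the Kato--Ponce bound of Lemma \ref{lem:kato-Ponce}-(ii) to the product (which handles the derivative loss from $\partial_x$ together with the low-regularity regime $0\le\gamma\le\frac12$) yields for each piece an integrand $\lesssim s^2$ times a product of norms of $v(t_n)$ that are all bounded by $\|v\|_{L^\infty((0,T);H^{\gamma+4})}$. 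Integrating $\int_0^\tau s^2\,ds=\tau^3/3$ closes the estimate.

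\textbf{The main obstacle.} The delicate point is the derivative count: the outer $\partial_x$ in each term costs one derivative, and to bound $\partial_t^2v$ (or, equivalently, to control the remainder $R(s)$) in $H^{\gamma+1}$ one needs two more derivatives than $v$ because of the $\partial_x^2$ and the quadratic/cubic nonlinearities in Lemma \ref{lem:two-form}-(i). Tracking this carefully is what forces the requirement $v\in H^{\gamma+4}$, and the challenge is to distribute the derivatives via the two-sided Kato--Ponce inequality so that the worst factor lands on the smooth exact solution $v(t_n)$ (where high regularity is available) rather than on a factor one only controls in $H^\gamma$. Once the bookkeeping confirms that no term demands more than $\gamma+4$ derivatives of $v$ and that every surviving term carries the full $s^2$ weight, the $\tau^3$ bound follows; the genuine work is the careful expansion-and-cancellation together with the correct placement of regularity in each Kato--Ponce application.
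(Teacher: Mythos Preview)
Your approach is correct and essentially identical to the paper's: both use the Taylor remainder $R(s)=w^n(s)=v(t_n+s)-v(t_n)-s\partial_t v(t_n)$ in integral form, invoke Lemma~\ref{lem:two-form}-(i) to control $\partial_t^2 v$, and close each surviving term with Kato--Ponce. The only cosmetic difference is that the paper packages the post-cancellation terms as the single product $w^n(s)\cdot h^n(s)$ with $h^n(s)=v(t_n+s)+v(t_n)+s\partial_t v(t_n)$ (plus the separate $s^2(\partial_t v)^2$ piece), rather than expanding the square fully; the two organizations are algebraically equivalent.

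One point to correct in your derivative bookkeeping: the bound $\|\partial_t^2 v\|_{H^\gamma}\lesssim\|v\|_{H^{\gamma+2}}^2(1+\|v\|_{H^{\gamma+2}})$ is off by one, and the claim that controlling $\partial_t^2 v$ in $H^{\gamma+1}$ costs ``two more derivatives'' is off as well. The dominant piece in Lemma~\ref{lem:two-form}-(i) is $\partial_x^2(\partial_x v)^2$, so
\[
\|\partial_t^2 v\|_{H^{\gamma+1}}\gtrsim\|\partial_x^2(\partial_x v)^2\|_{H^{\gamma+1}}=\|(\partial_x v)^2\|_{H^{\gamma+3}}\lesssim\|v\|_{H^{\gamma+4}}^2,
\]
i.e.\ three extra derivatives here, not two. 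Combined with the outer $\partial_x$ this is exactly where the full $H^{\gamma+4}$ budget is consumed. Your final conclusion that $v\in H^{\gamma+4}$ suffices is correct, but the intermediate inequalities as written should be adjusted.
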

\begin{proof}
For simplicity, we denote for $n=0,1,\ldots,\frac{T}{\tau}-1$
$$
w^n(s)=v(t_n+s)-v(t_n)-s\partial_tv(t_n),\quad h^n(s)=v(t_n+s)+v(t_n)+s\partial_tv(t_n),\quad s\geq0.
$$
Then from the definition, we have
\begin{align}
\mathcal{L}^n=&\frac{1}{2}\int_0^\tau \fe^{(t_n+s)\partial_x^3}\partial_x\Big[\fe^{-(t_n+s)\partial_x^3}w^n(s)
\cdot\fe^{-(t_n+s)\partial_x^3}h^n(s)\Big]\,ds\label{local:eq1}\\
&+\frac{1}{2}\int_0^\tau
 s^2\fe^{(t_n+s)\partial_x^3}\partial_x\left[\fe^{-(t_n+s)\partial_x^3}
 \partial_t v(t_n)\right]^2ds.\nonumber
\end{align}
Noting that by Taylor's expansion,
$$
w^n(s)=\int_{0}^s\!\!\int_{0}^{s'}\partial_t^2v(t_n+t)\,dt ds',
$$
and then by the formula in Lemma \ref{lem:two-form}-(i), we see
$$
w^n(s)=\int_{0}^s\!\!\int_{0}^{s'}\fe^{(t_n+t)\partial_x^3}
\partial_{x}^2\left[\frac32\Big(\fe^{-(t_n+t)\partial_x^3}\partial_xv(t_n+t)\Big)^2+\frac13\Big(\fe^{-(t_n+t)\partial_x^3}v(t_n+t)\Big)^3\right]\,dt ds'.
$$
Plugging the above formula into (\ref{local:eq1}), we get
$
\mathcal{L}^n
 =\mathcal{L}^n_1+\mathcal{L}^n_2,
$
where
\begin{align*}
 \mathcal{L}^n_1:=&
 \frac12\int_0^\tau\!\!\int_{0}^s\!\!\int_{0}^{s'} \fe^{(t_n+s)\partial_x^3}
\partial_x\Bigg[\fe^{(t-s)\partial_x^3}\partial_{x}^2
\left[\frac32\left(\fe^{-(t_n+t)\partial_x^3}\partial_xv(t_n+t)\right)^2
+\frac13\left(\fe^{-(t_n+t)\partial_x^3}v(t_n+t)\right)^3\right]\\
&\cdot\fe^{-(t_n+s)\partial_x^3}h^n(s)\Bigg]\,dtds'ds,\\
\mathcal{L}^n_2:=&\frac{1}{2}\int_0^\tau
 s^2\fe^{(t_n+s)\partial_x^3}\partial_x\left[\fe^{-(t_n+s)\partial_x^3}
 \partial_t v(t_n)\right]^2ds.
\end{align*}
For $\mathcal{L}^n_1$, firstly we have
\begin{align*}
  \|\mathcal{L}^n_1\|_{H^\gamma}\lesssim&\int_0^\tau\!\!\int_{0}^s\!\!\int_{0}^{s'}\!\! \Bigg(\Big\|\fe^{(t_n+s)\partial_x^3}\partial_x\Big[\fe^{(t-s)\partial_x^3}
  \partial_{x}^2\big(\fe^{-(t_n+t)\partial_x^3}
  \partial_xv(t_n+t)\big)^2\cdot\fe^{-(t_n+s)\partial_x^3}h^n(s)\Big]
  \Big\|_{H^\gamma}\\
  &+\Big\|\fe^{(t_n+s)\partial_x^3}\partial_x\Big[\fe^{(t-s)\partial_x^3}
  \partial_{x}^2\Big(\fe^{-(t_n+t)\partial_x^3}v(t_n+t)\Big)^3\cdot\fe^{-(t_n+s)\partial_x^3}h^n(s)\Big]
  \Big\|_{H^\gamma}\Bigg)\,dtds'ds\\
  \lesssim&\int_0^\tau\!\!\int_0^\tau\!\!\int_0^\tau\!\! \Bigg(\Big\|\fe^{(t-s)\partial_x^3}\partial_{x}^2
  \big(\fe^{-(t_n+t)\partial_x^3}
  \partial_xv(t_n+t)\big)^2\cdot\fe^{-(t_n+s)\partial_x^3}h^n(s)
  \Big\|_{H^{\gamma+1}}\\
  &+\Big\|\fe^{(t-s)\partial_x^3}
  \partial_{x}^2\Big(\fe^{-(t_n+t)\partial_x^3}v(t_n+t)\Big)^3\cdot\fe^{-(t_n+s)\partial_x^3}h^n(s)
  \Big\|_{H^{\gamma+1}}\Bigg)\,dtds'ds.
\end{align*}
Then by using  Lemma \ref{lem:kato-Ponce} (i), we obtain
\begin{align*}
  \|\mathcal{L}^n_1\|_{H^\gamma}
  \lesssim&\int_0^\tau\!\!\int_0^\tau\!\!\int_0^\tau\!\! \bigg[\Big\|\partial_{x}^2\big(\fe^{-(t_n+t)\partial_x^3}
  \partial_xv(t_n+t)\big)^2\Big\|_{H^{\gamma+1}}\big\|h^n(s)\big\|_{H^{\gamma+1}}\\
  &+\Big\|\partial_{x}^2\big(\fe^{-(t_n+t)\partial_x^3}
  v(t_n+t)\big)^3\Big\|_{H^{\gamma+1}}\big\|h^n(s)\big\|_{H^{\gamma+1}}\bigg]\,dtds'ds\\
  \lesssim&\int_0^\tau\!\!\int_0^\tau\!\!\int_0^\tau\!\! \bigg[\Big\|\big(\fe^{-(t_n+t)\partial_x^3}
  \partial_xv(t_n+t)\big)^2\Big\|_{H^{\gamma+3}}\big\|h^n(s)\big\|_{H^{\gamma+1}}\\
  &+\Big\|\big(\fe^{-(t_n+t)\partial_x^3}
  v(t_n+t)\big)^3\Big\|_{H^{\gamma+3}}\big\|h^n(s)\big\|_{H^{\gamma+1}}\bigg]\,dtds'ds.
\end{align*}
Using Lemma \ref{lem:kato-Ponce} (i) again, we get that
\begin{align*}
  \|\mathcal{L}^n_1\|_{H^\gamma}
  \lesssim& \int_0^\tau\!\!\int_0^\tau\!\!\int_0^\tau\!\! \Big[\big\|v(t_n+t)\big\|_{H^{\gamma+4}}^2\big\|h^n(s)\big\|_{H^{\gamma+1}}
+ \big\|v(t_n+t)\big\|_{H^{\gamma+3}}^3\big\|h^n(s)\big\|_{H^{\gamma+1}}\Big]
  \,dtds'ds.
\end{align*}
Hence, in sum, we get
\begin{align}
  \|\mathcal{L}^n_1\|_{H^\gamma}
  \le& C\int_0^\tau\!\!\int_0^\tau\!\!\int_0^\tau\!\! \Big(\big\|v(t_n+t)\big\|_{H^{\gamma+4}}^2+\big\|v(t_n+t)\big\|_{H^{\gamma+4}}^3\Big)
  \big\|h^n(s)\big\|_{H^{\gamma+1}}\,dtds'ds.\label{23.29}
\end{align}
Now we control the term $h^n(s)$. From \eqref{vt eq} and  the Kato-Ponce inequality in Lemma \ref{lem:kato-Ponce}, we have
\begin{align}
\big\|h^n(s)\big\|_{H^{\gamma+1}}
\lesssim & \|v(t_n+s)\|_{H^{\gamma+1}}+\|v(t_n)\|_{H^{\gamma+1}}
+s\|\partial_tv(t_n)\|_{H^{\gamma+1}}\notag\\
\lesssim & \|v(t_n+s)\|_{H^{\gamma+1}}+\|v(t_n)\|_{H^{\gamma+1}}
+s\Big\|\left(\fe^{-t_n\partial_x^3}v(t_n)\right)^2\Big\|_{H^{\gamma+2}}\notag\\
\lesssim & \|v(t_n+s)\|_{H^{\gamma+1}}
+\|v(t_n)\|_{H^{\gamma+1}}
+s\big\|v(t_n)\big\|_{H^{\gamma+2}}^2\notag\\
\lesssim & \|v\|_{L^\infty((0,T);H^{\gamma+2})}
+\tau\|v\|_{L^\infty((0,T);H^{\gamma+2})}^2.\label{est:h}
\end{align}
Inserting this estimate into \eqref{23.29}, we get
$$
 \|\mathcal{L}^n_1\|_{H^\gamma}\leq C\tau^3,
$$
where $C$ depends on $\|v\|_{L^\infty((0,T);H^{\gamma+4})}$.

For $\mathcal{L}^n_2$, similarly as above, we have
\begin{align*}
  \|\mathcal{L}^n_2\|_{H^\gamma}\lesssim & \int_0^\tau s^2\Big\|\partial_x\left[\fe^{-(t_n+s)\partial_x^3}
 \partial_t v(t_n)\right]^2ds\Big\|_{H^\gamma}\,ds\\
  \lesssim & \int_0^\tau s^2\Big(\big\|\partial_x\partial_tv(t_n)\big\|_{H^\gamma}\big\| \partial_tv(t_n)\big\|_{H^{\gamma_1}}+\big\|\partial_x\partial_tv(t_n)\big\|_{L^2}\big\| \partial_tv(t_n)\big\|_{H^{\gamma+\gamma_1}}\Big)\,ds.
\end{align*}
Similarly as \eqref{est:h}, we obtain that
\begin{align*}
  \|\mathcal{L}^n_2\|_{H^\gamma}\lesssim &\tau^3\|v\|_{L^\infty((0,T);H^{\gamma+2})}.
\end{align*}
Combining the estimates on $\mathcal{L}^n_1$ and $\mathcal{L}^n_2$, we finish the proof of the lemma.
\end{proof}

\subsection{Proof of Theorem \ref{thm:convergence}} \label{subsec4}
Now, combining the local error estimate and the stability results, we give the proof of Theorem \ref{thm:convergence}.
As described in the subsection \ref{sub1p5}, it is sufficient to estimate $\|v(t_n)-v^n\|_{H^\gamma}$. From \eqref{11.22}, Lemma \ref{lem:L} and Lemma \ref{lem:stab}, there exit constants $C>0$ and $\tau_0>0$ (from Lemma \ref{lem:stab}), such that for $0<\tau\leq\tau_0$, we have
\begin{align*}
\|v(t_{n+1})-v^{n+1}\|_{H^\gamma}
\le C\tau^3+(1+C\tau)\|v(t_{n})-v^{n}\|_{H^\gamma},\quad n=0,1,\ldots,\frac{T}{\tau}-1,
\end{align*}
where $C,\tau_0$ depend on $T$ and $\|v\|_{L^\infty((0,T);H^{\gamma+4})}$.
By iteration and Gronwall's inequality, we get
\begin{align*}
\|v(t_{n+1})-v^{n+1}\|_{H^\gamma}
\le \tau^3\sum\limits_{j=0}^n(1+C\tau)^j\le C \tau^2,\quad n=0,1,\ldots,\frac{T}{\tau}-1,
\end{align*}
which proves Theorem \ref{thm:convergence}.\qed

\section{Numerical results}\label{sec:numerical}
In this section, we carry out numerical experiments of the presented LRI scheme (\ref{LRI}) for justifying the convergence theorem. Also, we provide the numerical investigations of convergence of the Strang splitting scheme \cite{splitting1,splitting2} (or see the Appendix \ref{appendix}) as comparisons.

To get an initial data with the desired regularity, we construct $u_0(x)$ by the following strategy \cite{lownls}.
 Choose $N>0$ as an even integer and discrete the spatial domain $\bT$ with grid points
$x_j=j\frac{2\pi}{N}$ for $j=0,\ldots,N$.
Take a uniformly distributed random vectors $\mathrm{rand}(N,1)\in[0,1]^N$ and denote
$$\mathcal{U}^N=\mathrm{rand}(N,1).$$
Then we define
\begin{equation}\label{non-smooth}
u_0(x):=\frac{|\partial_{x,N}|^{-\theta}\mathcal{U}^N}
{\||\partial_{x,N}|^{-\theta}\mathcal{U}^N\|_{L^\infty}},\quad
x\in\bT,
\end{equation}
where the pseudo-differential operator $|\partial_{x,N}|^{-\theta}$ for $\theta\geq0$ reads: for Fourier modes $l=-N/2,\ldots$, $N/2-1$,
\begin{equation*}
 \left(|\partial_{x,N}|^{-\theta}\right)
 _l=\left\{\begin{split}
 &|l|^{-\theta}\quad \mbox{if}\ l\neq0,\\
  &0\qquad\ \, \mbox{if}\ l=0.
  \end{split}\right.
\end{equation*}
Thus, we get $u_0\in H^\theta(\bT)$ for any $\theta\geq0$.  We implement the spatial discretizations of the numerical methods within discussions by the Fourier pseudo-spectral method \cite{Shen} with a large number of grid points $N=2^{12}$ in the torus domain $\bT$. We shall present the error
$u(x,t_n)-u^n$ in the $H^{\gamma}$-norm ($\gamma=0$ or $2$) at the final time $t_n=T=2$, where the exact solution is obtained numerically by the LRI scheme (\ref{LRI}) with $\tau=10^{-4}$. Figure \ref{fig:LRI} shows the convergence results of the LRI scheme (\ref{LRI}) by using different time step $\tau$ under the initial data of different regularities. In Figure \ref{fig:strang}, we show the corresponding convergence curves of the Strang splitting scheme (\ref{Strang}) from \cite{splitting1,splitting2}. The details of the implementations of the Strang splitting scheme is given in the Appendix \ref{appendix}.

\begin{figure}[t!]
$$\begin{array}{cc}
\psfig{figure=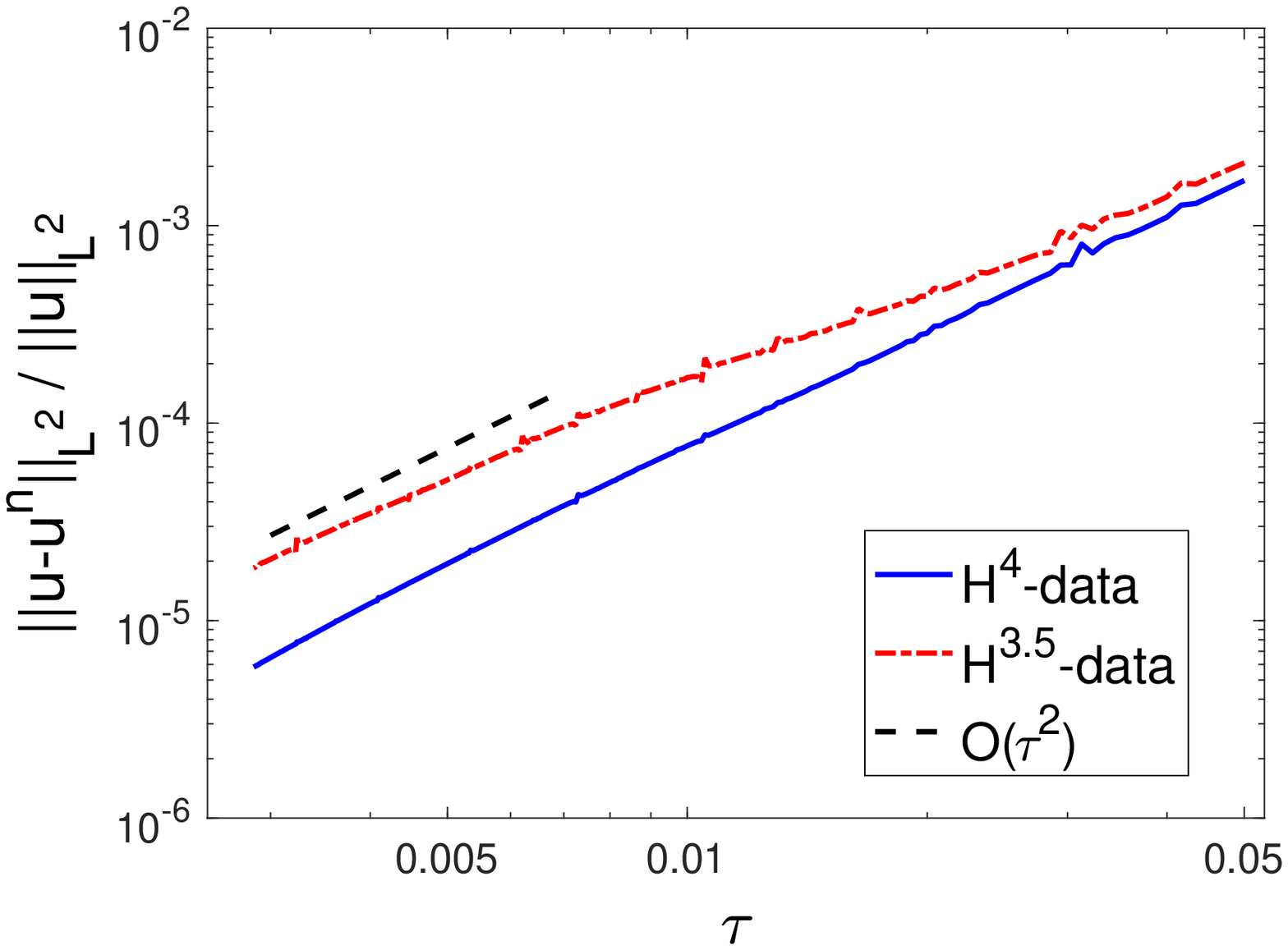,height=6cm,width=7cm}&
\psfig{figure=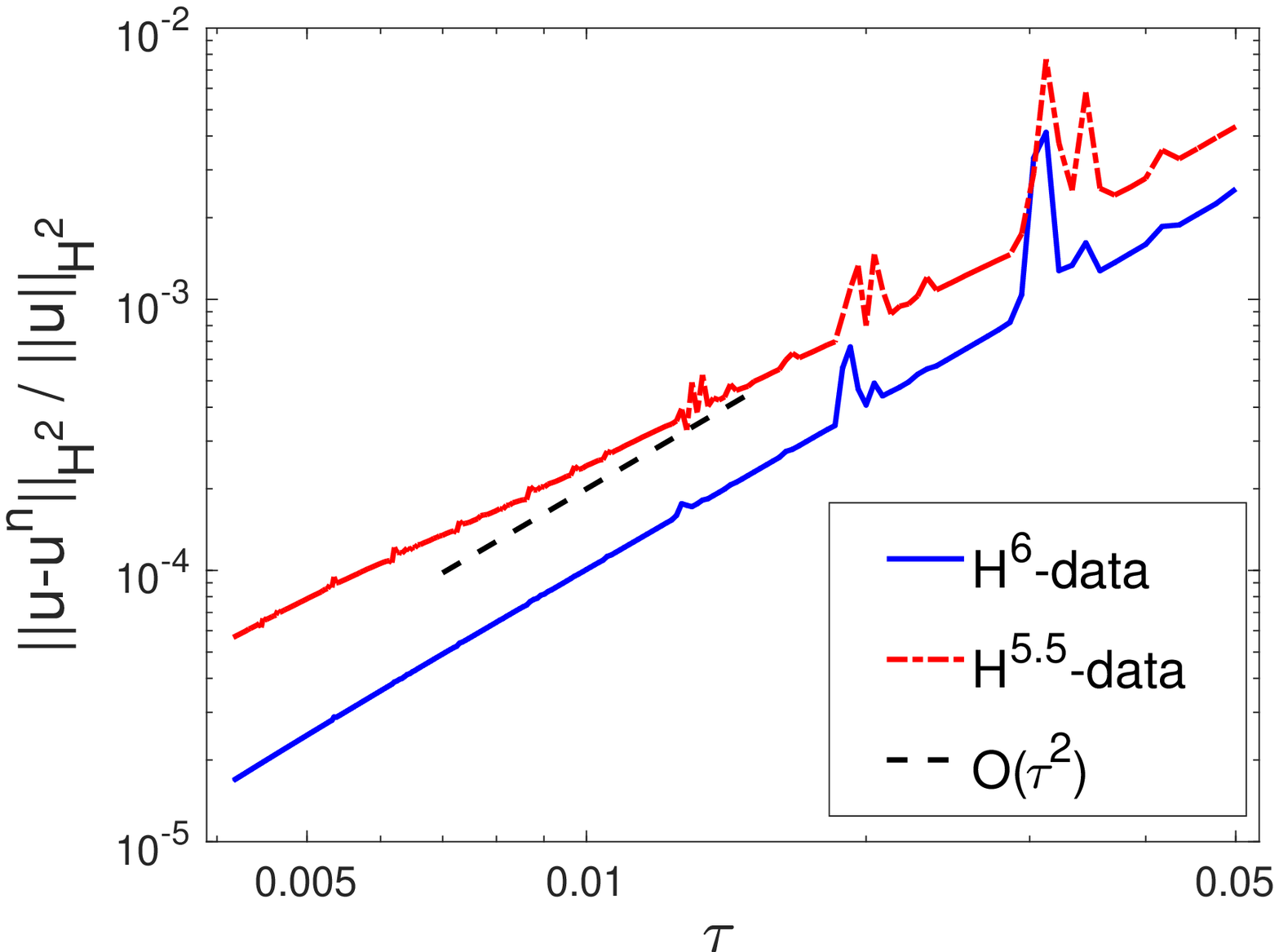,height=6cm,width=7cm}
\end{array}$$
\caption{Convergence of the LRI scheme: relative error $\|u-u^n\|_{L^2}/\|u\|_{L^2}$ (left) and $\|u-u^n\|_{H^2}/\|u\|_{H^2}$ (right) at $t_n=T=2$ under initial data of different regularities.
}
\label{fig:LRI}
\end{figure}

\begin{figure}[t!]
$$\begin{array}{cc}
\psfig{figure=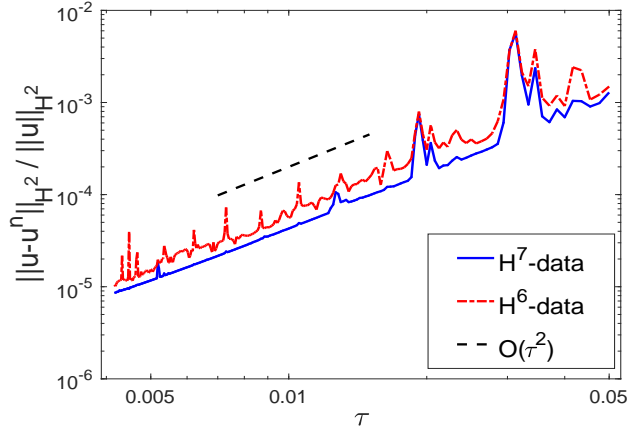,height=6cm,width=9cm}
\end{array}$$
\caption{Convergence of the Strang splitting scheme: relative error $\|u-u^n\|_{H^2}/\|u\|_{H^2}$ at $t_n=T=2$ under initial data of different regularities.}
\label{fig:strang}
\end{figure}

Based on the numerical results from Figures \ref{fig:LRI} \& \ref{fig:strang}, we have the following observations:

1) The presented LRI scheme (\ref{LRI}) has the second order accuracy in time under $H^{\gamma}$-norm with initial data in $H^{\gamma+4}$ for any $\gamma\geq0$ (see the blue solid lines in Figure \ref{fig:LRI}), while with less regularity than $H^{\gamma+4}$ (see the red dash-dot lines in Figure \ref{fig:LRI}), the LRI scheme shows some convergence order reduction. This indicates that our theoretical estimate in Theorem \ref{thm:convergence} is optimal and the regularity assumption is sharp.

2) The Strang splitting scheme (\ref{Strang}) converges at the second order rate in $H^\gamma$ with initial data in $H^{\gamma+5}$ (see the blue solid line in Figure \ref{fig:strang}), which confirms the theoretical result proved in \cite{splitting2}.  With less regular initial data, e.g. $H^{\gamma+4}$ initial data, the scheme  still converges but with an unstable order (see the red dash-dot line in Figure \ref{fig:strang}). The implicity of Strang splitting scheme makes  the computations very time-consuming.

3) The error from LRI (\ref{LRI}) and the Strang splitting scheme (\ref{Strang}) are rather similar (cf. the right one in  Figure \ref{fig:LRI} and  Figure \ref{fig:strang}), while the LRI (\ref{LRI}) is much more efficient.

\section{Conclusion} \label{sec:conclusion}
In this work, we have studied numerically the KdV equation on a torus under rough initial data. By some rigorous tools from harmonic analysis, we established the sharp convergence theorem of an exponential-type integrator as outlined in  \cite{kdv-kath}. The theoretical result shows that the presented integrator can reach the second order accuracy in $H^\gamma$ space with initial data from $H^{\gamma+4}$ for any $\gamma\geq0$. Compared with classical numerical methods, the presented integrator requires less regularity of the solution for optimal convergence rate and is more efficient for solving the KdV equation under rough initial data case.

\appendix

\section{Strang splitting scheme}\label{appendix}
As firstly used in \cite{splitting0}, the Strang splitting method applies to the KdV equation (\ref{model}) by splitting it into a linear part:
\begin{equation*}
  \Phi_A^t:\quad \partial_t u(t,x)+\partial_x^3u(t,x)=0,\quad t>0,\ x\in\bT,
\end{equation*}
and an inviscid Burgers equation:
\begin{equation*}
  \Phi_B^t:\quad \partial_t u(t,x)=\frac{1}{2}\partial_x(u(t,x))^2,\quad t>0,\ x\in\bT,
\end{equation*}
where $\Phi_A^t(\cdot)$ and $\Phi_B^t(\cdot)$ denote the propagators. Then the Strang splitting scheme reads: denote $u^n=u^n(x)\approx u(t_n,x)$ and for $n\geq0$,
\begin{equation}\label{Strang}
  u^{n+1}=\Phi_A^{\tau/2}\circ\Phi_B^{\tau}\circ\Phi_A^{\tau/2}(u^n).
\end{equation}
The propagator $\Phi_A^t(u)=\fe^{-t\partial_x^3}u$ is given exactly. Here to implement the Strang splittng scheme as has been analyzed in \cite{splitting1,splitting2}, we seek for the exact solution at the Burgers step (at least up to machine precision). The solution of $\Phi_B^t(u)$ can be given by the characteristics method as follows. For $x_0\in\bT$, let $x=x(t)$ satisfying
$$\dot{x}(t)=-u(t,x(t)),\quad t>0,\quad x(0)=x_0.$$
Along the characteristics we have
$\frac{d}{dt}u(t,x(t))=0$, and so
$
 \dot{x}(t)=-u(0,x_0)
$ which gives
$$x(t)-x_0=-tu(0,x_0),\quad t\geq0.$$
Hence, with $u(0,x)=u_0(x)$ known, if we want to compute $u(t,x_j)$ at the grid point $x_j\in\bT$, we set $x(t)=x_j$ and so $u(t,x_j)=u_0(x_0)$. Then  we  solve the nonlinear equation $x_j=x_0-tu_0(x_0)$ for the initial position $x_0$, which can be done by for example the Newton's iteration. Afterwards, we interpolate $u_0$ at $x_0$,
which can be obtained accurately by the non-uniform fast Fourier transform (NUFFT) \cite{NUFFT}. In our implementation, we apply the NUFFT to the accuracy $\delta=10^{-13}$ and the same for the Newton's iteration: $\delta=x_j-x_0^{(n)}+tu_0\left(x_0^{(n)}\right)$. The full scheme is implicit.

\section*{Acknowledgements}
Y. Wu is partially supported by NSFC 11771325 and 11571118.  X. Zhao is partially supported by the Natural Science Foundation of Hubei Province No. 2019CFA007, the NSFC 11901440 and the starting research grant of Wuhan University. Part of the work was done while the authors were visiting the Shanghai Center for Mathematical Sciences.


\bibliographystyle{model1-num-names}

\end{document}